\documentclass[11pt]{article}
\usepackage[utf8]{inputenc}
\usepackage{geometry}
\usepackage{aligned-overset}
\usepackage{amsmath}
\usepackage{amsfonts}
\usepackage{amssymb}
\usepackage{amsthm}
\usepackage{graphicx}
\usepackage{enumitem,mdwlist}
\setlist[itemize]{topsep=0ex,itemsep=0ex,parsep=0.4ex}
\setlist[enumerate]{topsep=0ex,itemsep=0.1ex,parsep=0.4ex}
\usepackage{bm}
\usepackage{caption}
\usepackage{subcaption}
\usepackage{url}
\usepackage[linktoc = all, hidelinks, colorlinks, unicode=true]{hyperref} 
\hypersetup{linkcolor={blue}, citecolor={green!60!black}, urlcolor={black}}
\usepackage[table,xcdraw]{xcolor}
\usepackage{mathtools}
\usepackage{systeme}
\usepackage[capitalise]{cleveref}
\usepackage{thm-restate}
\usepackage{todonotes}
\usepackage{bbm}
\usepackage{comment}
\usepackage{dsfont}
\usetikzlibrary{shapes,arrows,calc}
\usepackage{setspace}
\usepackage[square,sort,comma,numbers]{natbib} 
\newtheorem{theorem}{Theorem}[section]
\newtheorem{proposition}[theorem]{Proposition}

\newtheorem{conjecture}[theorem]{Conjecture}
\newtheorem{lemma}[theorem]{Lemma}
\newtheorem{corollary}[theorem]{Corollary}
\newtheorem{fact}[theorem]{Fact}

\newtheorem*{remarks*}{Remarks}
\newtheorem*{remark*}{Remark}

\crefname{equation}{}{}
\crefname{item}{}{}

\numberwithin{equation}{section}

\newcommand{\inv}{^{-1}}
\renewcommand{\geq}{\geqslant}
\renewcommand{\leq}{\leqslant}

\Crefname{conjecture}{Conjecture}{Conjectures}
\Crefname{claim}{Claim}{Claims}
\Crefname{lemma}{Lemma}{Lemmas}
\Crefname{fact}{Fact}{Facts}
\Crefname{subsection}{Subsection}{Subsections}
\Crefname{figure}{Figure}{Figures}

\newcommand{\ceil}[1]{{\left\lceil #1 \right\rceil}}
\newcommand{\defi}[1]{\emph{\color{red!50!black}#1}}

\newcommand{\ella}[1]{{\color{purple}{Ella: #1}}}

\def\NN{\mathbb N}
\def\inv{^{-1}}

\def\Free{\mathrm{Free}}

\let\eps=\varepsilon
\let\theta=\vartheta
\let\phi=\varphi

\newcommand{\cT}{\mathcal{T}}

\theoremstyle{definition}

\def\namedlabel#1#2{\begingroup
    #2%
    \def\@currentlabel{#2}%
    \phantomsection\label{#1}\endgroup
}

\def\ew#1{}
	\renewcommand{\ew}[1]{\footnote{\textbf{EW:}#1}}
    
\def\ella#1{}
	\renewcommand{\ella}[1]{\textcolor{orange}{\textbf{Ella:}#1}}

\def\eps{{\varepsilon}}
\title{Are trees really just butterflies in disguise?
}

\author{
Giovanne Santos\footnotemark[1]
\and
Maya Stein\footnotemark[1]
\and 
Ella Williams\footnotemark[2]
}

\date{\today}

\begin{document}
\maketitle

\begin{abstract}
    As a generalisation of the Erd\H{o}s-S\'os conjecture about graphs, Addario-Berry, Havet, Linhares Sales, Reed and Thomassé conjectured that every digraph on $n$ vertices with more than $(k-1)n$ arcs contains every antidirected tree with $k$ arcs. We prove a dense, approximate version of this for trees with bounded maximum degree, as well as for trees whose layers are evenly distributed. We use a regularity based approach, centred around finding a copy of a given tree in the blow up of a caterpillar.
\end{abstract}

\renewcommand{\thefootnote}{\fnsymbol{footnote}}
\footnotetext[1]{Departamento de Ingenier\'ia Matem\'atica y Centro de Modelamiento
Matem\'atico (CNRS IRL2807), Universidad de Chile, Santiago, Chile. Email: {\texttt{\{gsantos,mstein\}@dim.uchile.cl}}. Supported  by ANID grant CMM Basal FB210005. MS also supported by ANID Fondecyt Regular Grant 1260024.}
\footnotetext[2]{Department of Mathematics, University College London, UK. Email: \href{mailto:ella.williams.23@ucl.ac.uk}{\texttt{ella.williams.23@ucl.ac.uk}}. Supported by the Martingale Foundation.}

\section{Introduction}

The Erd\H{o}s-S\'os conjecture~\cite{Erdos63}, posed in 1964, is one of the central problems in extremal graph theory concerning the appearance of trees. It states that every graph $G$ on $n$ vertices with more than $(k-1)n/2$ edges contains a copy of every tree with $k$ edges.
Besomi, Pavez-Sign\'e and Stein~\cite{besomi2021erdHos}, and independently Rohzoň~\cite{rohzon19} proved this conjecture approximately, if the host graph is dense (meaning the size of the embedded tree is linear in the size of the graph), and the tree has sublinear maximum degree. For trees with maximum degree $O(1)$ and $k$ large, Pokrovskiy~\cite{AP_hyperstability} recently proved that the conjecture is true.

A natural question is whether an analogous statement holds for directed graphs. Unlike the undirected setting, however, not every orientation of a tree can be forced by an average degree condition of this type alone. The obstruction comes from a complete bipartite orientation, which contain no directed path of length two. Consequently, the only class of directed trees for which such a statement can hold is the class of antidirected trees, that is, those in which every vertex has either out-degree $0$, or in-degree $0$. 

Indeed, let $D$ be the complete bipartite digraph with bipartition $(A,B)$ and all arcs directed from $A$ to $B$. Then $D$ contains no directed path of length two, and hence no non-antidirected tree. Since $e(D)= |A||B|$, choosing $|A|$ and $|B|$ sufficiently large yields digraphs with arbitrarily large average degree avoiding every non-antidirected tree.

For antidirected trees, an analogue of the Erd\H{o}s-S\'os conjecture has been posed.
 
\begin{conjecture}[Addario-Berry, Havet, Linhares Sales, Reed and Thomassé \cite{Addarioberry2013oriented}]\label{conj:main}
 Every digraph on $n$ vertices with more than $(k-1)n$ arcs contains a copy of every antidirected tree with $k$ arcs.
\end{conjecture}

This conjecture implies the Erd\H{o}s-S\'os conjecture, as for any graph $G$, we can consider a corresponding digraph with $2e(G)$ arcs, constructed by taking both arcs for each edge of $G$.

The authors of~\cite{Addarioberry2013oriented} proved that Conjecture~\ref{conj:main} holds for trees with diameter at most three.
Stein and Trujillo-Negrete~\cite{stein_trujillo-negrete2025} proved that it is true for large digraphs not containing any of three forbidden subgraphs, as well as proving it holds for antidirected caterpillars~\cite{stein_trujillo-negrete2025}. Stein and Zárate-Guerén~\cite{Stein_Zarate-Gueren_2024} proved an approximate version of Conjecture~\ref{conj:main} for balanced antidirected trees of maximum degree $(\log n)^{o(1)}$, when the host graph is dense and oriented. Our main result improves this for trees with bounded maximum degree, by removing the balanced condition and not requiring the host graph to be oriented.  
\begin{theorem}\label{thm:main_dense}
    For all $\alpha>0$ and $\Delta\in \NN$ there exists $k_0$ such that the following holds for all $n\geq k>k_0$ with $k\geq \alpha n$. Every digraph  $D$ on $n$ vertices with $e(D)\geq (1+\alpha)kn$ contains a copy of every antidirected tree $T$ on $k$ arcs with $\Delta(T)\leq \Delta$.
\end{theorem}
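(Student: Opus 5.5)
We follow the regularity approach announced in the abstract: we find a structured dense configuration in the reduced digraph, and then embed $T$ into a blow-up of a caterpillar. First we apply the directed regularity lemma (Alon--Shapira / Kohayakawa style, for digraphs) to $D$ with parameters $\eps\ll d\ll\alpha$. This gives a partition $V_0\cup V_1\cup\dots\cup V_N$ and a reduced digraph $R$ on $[N]$. In $R$ there is an arc $ij$ whenever the pair $(V_i,V_j)$, with arcs oriented from $V_i$ to $V_j$, is $\eps$-regular with density at least $d$. Since $e(D)\ge(1+\alpha)kn$, standard cleaning shows $e(R)\ge(1+\alpha/2)\frac{k}{n}N^2$. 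Writing $\kappa=kN/n$, this says $R$ has more than $(1+\alpha/2)\kappa N$ arcs, where $\kappa\ge\alpha N$ is linear in $N$.

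Next we locate the structure in $R$. Each vertex $x\in[N]$ has an out-degree and an in-degree. By averaging, some vertex has $\deg^+(x)+\deg^-(x)$ large. More usefully, we delete vertices of small total degree iteratively to obtain a nonempty subdigraph $R'$ with minimum semidegree-sum at least about $(1+\alpha/4)\kappa$. In $R'$ we find an antidirected \emph{caterpillar skeleton}: an antidirected path $x_1x_2\dots x_m$ (arcs alternating in direction), together with, for each $x_i$, a set of leaves. The leaves are out-neighbours if $x_i$ is a source on the path and in-neighbours otherwise, and their total weight in clusters exceeds $\kappa$. Concretely, we want an arc $xy$ of $R'$ with $\deg^+(x)+\deg^-(y)\ge(1+\alpha/4)\kappa$. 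Such an arc exists by a counting argument: sum $\deg^+(x)+\deg^-(y)$ over all arcs, then apply Cauchy--Schwarz. This is the directed analogue of the Ajtai--Komlós--Simonovits--Szemerédi / Besomi--Pavez-Signé--Stein structure. The arc $xy$, together with $N^+(x)$ and $N^-(y)$, forms a double-star, which is a caterpillar whose blow-up holds room for both colour classes of $T$. Here "colour classes" means the sources and the sinks of $T$. The key point is that sources of $T$ go to clusters in $\{x\}\cup N^-(y)$ and sinks go to clusters in $\{y\}\cup N^+(x)$. The space available there is the sum of the cluster sizes.

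The tree is handled as follows. Because $T$ is unbalanced, the obvious double-star may not fit: the sources might need more space than $N^-(y)$ offers, while $N^+(x)$ has surplus. This is where the caterpillar blow-up is needed. We cut $T$ into $O(1/\beta)$ small subtrees of size at most $\beta k$ hanging off a small set of seeds, using the standard tree-cutting lemma; it applies because $\Delta(T)\le\Delta$. Each small subtree has its sources and sinks in some ratio. A small subtree rooted at a source can be placed into a regular pair $(V_x,V_z)$ with $z\in N^+(x)$, with its sinks in $V_z$ and its sources in $V_x$. We do this by a greedy regularity embedding that keeps every cluster at most $(1-\eps')$-full. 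When space in $V_x$ runs out, we move along the caterpillar spine. Bounded degree guarantees the typical-vertex embedding never gets stuck: each embedded vertex keeps $\ge (d-\eps)|\text{cluster}|$ candidates in the target cluster for its at most $\Delta$ children.

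The main obstacle is the assignment problem. We must allocate the pieces of $T$ to edges of the caterpillar so that every cluster receives at most its capacity, given an arbitrary imbalance between sources and sinks in $T$. We also must ensure that seed vertices connecting different pieces land in clusters adjacent (with the correct orientation) to where their pieces go. We would first try to formulate this as a fractional matching problem. Pieces are fractionally assigned to arcs $x\to z$ or $w\to y$ of the double-star, and the degree-sum condition $\deg^+(x)+\deg^-(y)\ge(1+\alpha/4)\kappa$ supplies a feasible fractional solution via an LP / Hall-type argument; the slack $\alpha$ absorbs rounding, the exceptional set, and the $O(\beta k)$ error from integrality. If a single arc does not suffice, for example because $\deg^+(x)$ alone is large but $\deg^-(y)$ is tiny, we extend to a longer caterpillar spine $x_1x_2\dots$. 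We get this by repeating the degree argument inside $R'$, using that every vertex of $R'$ has large total degree, so that the surplus side can be reused through connections of length two, which preserve the correct parity of the antidirected structure.
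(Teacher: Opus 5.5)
There is a genuine gap. It sits exactly at the step you call the main obstacle, and the double-star part of the plan is in fact false.

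\textbf{Why the double star fails.} Within the double star on $x\to y$ with leaf sets $N^+(x)$ and $N^-(y)$, a source of $T$ placed in a cluster of $N^-(y)$ can send its arcs only to $V_y$. Likewise, a sink placed in a cluster of $N^+(x)$ can receive arcs only from $V_x$. So every arc of $T$ needs an endpoint mapped to $x$ or $y$. These two clusters hold at most $2m$ vertices of $T$, each of degree at most $\Delta$. Hence at most $2\Delta m\leq 2\Delta\eps n<\alpha n\leq k$ arcs fit, using that $N\geq 1/\eps$ gives $m\le\eps n$.

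So the usable space is not the sum of the cluster sizes, and the bottleneck is the two centres rather than the source/sink imbalance. No LP or Hall-type argument driven by $\deg^+(x)+\deg^-(y)$ can yield a feasible assignment. Your own description, in which the sources of every piece placed at $x$ go into $V_x$, already exhibits this.

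\textbf{What is actually needed.} You need a caterpillar on roughly $\kappa$ clusters, almost all of which get nearly filled. You give neither its shape nor a way to find it in $R$. ``Repeating the degree argument inside $R'$'' cannot supply it, for two reasons. First, minimum in-plus-out-degree loses a factor of two: disjoint complete symmetric digraphs on $(1+\alpha)\kappa/2+1$ vertices have minimum in-plus-out-degree $(1+\alpha)\kappa$, yet contain no tree with $\kappa$ arcs at all. Second, it gives no control over the orientations needed to attach leaves on the correct side.

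\textbf{The missing idea.} The paper reverses the order: it first builds the caterpillar from $T$, and only then looks for it in $R$.
\begin{itemize}
\item It processes seeds and pieces in a connected order, keeping a current tip and a current leaf. It attaches a new leaf or extends the spine whenever one of them becomes full.
\item A new piece whose parent seed was mapped to an old, already full part of the caterpillar is routed along the path $z_0z_1\dots z_q$ from the parent's image to the current tip. The vertices at distance $j$ from the parent go to $z_j$. By bounded degree, only $\Delta^{O(\Delta/\beta)}$ vertices land on full vertices, which is negligible. This also settles your concern about seeds connecting different pieces.
\item The outcome is a homomorphism $\psi\colon V(T)\to V(C)$ in which every fibre has size less than $(1-\eps^{1/4})m$ and all but two fibres have size at least $(1-\alpha^2/8)m$. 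This forces $|C|\leq(1+\alpha^2)\kappa$.
\item Then $C$ is embedded in $R$ by the Stein--Trujillo-Negrete theorem for antidirected caterpillars (Proposition~\ref{prop:ST_caterpillar}), which applies since $e(R)\geq(1+\alpha^2)\kappa|R|$.
\end{itemize}
This exact caterpillar result is a genuine ingredient that degree counting in $R'$ does not replace.

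The remaining parts of your outline match the paper: the regularity step, the density of $R$, the cutting lemma, and the typical-vertex greedy embedding.
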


As applications of Theorem~\ref{thm:main_dense}, presented in \cref{sec:applications}, we obtain new bounds on Burr's conjecture for bounded degree antidirected trees and dense host graphs, as well as new upper bounds on the corresponding directed and oriented Ramsey numbers.

Alongside \cref{thm:main_dense}, one may ask for a similar result for trees that do not have constant maximum degree. We focus on trees that are `well distributed', in the sense that, we can choose a root of the tree, such that there are not too many vertices having the same distance to the root. We think of these trees as having roughly balanced layers.

\begin{theorem}\label{thm:balance_trees}
    For all $\alpha>0$ there exist $\gamma \in (0,1)$ and $k_0\in \NN$, such that for all $n\geq k> k_0$ satisfying $k\geq \alpha n$, the following holds.  Let $T$ be a rooted antidirected tree on $k$ arcs, such that there are at most $\gamma k$ vertices in $T$ that are at distance exactly $i$ from the root, for all $i\geq 0$.
    Every digraph  $D$ on $n$ vertices with $e(D)\geq (1+\alpha)kn$ contains a copy of $T$.
\end{theorem}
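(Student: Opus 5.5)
We follow the regularity-based strategy outlined in the abstract, reducing \cref{thm:balance_trees} to embedding $T$ into the blow-up of an antidirected caterpillar found in the reduced digraph.

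\textbf{Step 1: regularity and the reduced digraph.} Apply the directed regularity lemma to $D$ with parameters $\eps\ll d\ll\alpha$, and pass to the reduced digraph $R$ on clusters $V_1,\dots,V_N$ of size $m$. Put an arc $(i,j)$ in $R$ when $(V_i,V_j)$ is $\eps$-regular with density at least $d$ in the direction $V_i\to V_j$. Since $e(D)\geq(1+\alpha)kn$ and only $o(n^2)$ arcs are lost, $R$ has more than $(1+\alpha/2)\frac{k}{n}N\cdot N$ arcs. Writing $\kappa=\frac{k}{n}N$, the reduced digraph has average out-plus-in degree above $2(1+\alpha/2)\kappa$.

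\textbf{Step 2: a dense caterpillar-like structure in $R$.} Form the bipartite ``split'' graph $B$ with vertex sets $R^+$ and $R^-$ (out-copies and in-copies of the clusters) and edges $i^+j^-$ for arcs $(i,j)$. Antidirected trees in $D$ correspond exactly to trees in $B$ whose bipartition classes map consistently: out-vertices of $T$ go to $R^+$ and in-vertices to $R^-$. $B$ has average degree above $(1+\alpha/2)\kappa$. By iteratively deleting low-degree vertices, we obtain a subgraph $B'$ with minimum degree at least $(1+\alpha/4)\kappa/2$ on both sides, and somewhere a vertex of degree at least $(1+\alpha/4)\kappa$. This is the standard Erd\H{o}s--S\'os-type reduction. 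From this we extract a ``butterfly''/caterpillar skeleton: a path (spine) of clusters in $B'$, each spine cluster having many neighbouring clusters with total neighbourhood weight exceeding $(1+\alpha/8)\kappa m\approx(1+\alpha/8)k$. We would invoke whatever caterpillar lemma the paper proves for the bounded-degree case.

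\textbf{Step 3: embedding $T$ level by level.} Root $T$ at the given root and cut it into its distance layers $L_0,L_1,\dots$, each of size at most $\gamma k$. Consecutive layers alternate between out- and in-vertices only per vertex parity, not globally. However, layer $i$ vertices are all adjacent to layer $i\pm1$, so we embed layers greedily along the spine. We assign each layer $L_i$ to a set of clusters adjacent to the clusters hosting $L_{i-1}$, spreading it by weight so that no cluster receives more than a $(1-\alpha/16)$-fraction of its capacity. Within a regular pair, a vertex's children can be embedded one at a time into typical vertices using standard regularity greedy embedding. The crucial point is that since each layer has at most $\gamma k\ll\eps m$-scale fractions relative to the total, we can rebalance: the layers act like small fractional ``tokens''. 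This is what replaces the bounded degree hypothesis, where high-degree vertices otherwise force a big neighbourhood into a single cluster pair. Each vertex's children set may be huge, but they lie in one layer of size at most $\gamma k$, so it fits into the neighbourhood clusters with room to spare after choosing $\gamma\ll d\alpha$.

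\textbf{Main obstacle.} The main difficulty is Step 3's assignment: showing that the layered sizes, each at most $\gamma k$, can be distributed fractionally along the caterpillar blow-up so that every cluster's load is at most $(1-\alpha/16)m$ and parent/child clusters are always adjacent in the right direction. We would first try a greedy ``water-filling'' along the spine, alternating between the two ends of a spine edge. The budget inequality total capacity $\geq(1+\alpha/8)k$ against load $k$ leaves slack $\alpha k/8\gg\gamma k$, absorbing the rounding error of one layer at each switch. The remaining issue is typicality with unbounded degrees: a vertex with many children needs a large neighbourhood in the target cluster. We handle this by embedding only into vertices typical to all used clusters and reserving an $\eps$-fraction, which is affordable since there are at most $N$ clusters.
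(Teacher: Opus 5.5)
Your outline has the right global shape (regularity, a caterpillar in the reduced digraph $R$, then a greedy regular-pair embedding). However, the two steps that carry the proof are not supplied, and the routes you sketch for them would not work.

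\textbf{The assignment of layers.} The decisive fact you get wrong is the claim in Step~3 that layers alternate ``only per vertex parity, not globally''. In an antidirected tree every arc goes from a source to a sink, so adjacent vertices have opposite types. Hence each layer $L_i$ consists \emph{entirely} of sources or \emph{entirely} of sinks, according to the parity of $i$. This lets the paper send each whole layer to a \emph{single} vertex of a caterpillar $C$. Consecutive layers alternate between a current tip $x$ and a current leaf $y$, which form an arc of $C$. When $y$ becomes nearly full, a fresh leaf is attached to $x$; when $x$ becomes nearly full, $y$ is promoted to tip and gets a fresh leaf.

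This needs $\gamma\ll\eps$, so that $\gamma$ depends on $M_0(\eps)$ and $\gamma k\ll\sqrt{\eps}\,m$; your $\gamma\ll d\alpha$ is not enough. Then a whole layer fits in one cluster with negligible overshoot, every vertex of $C$ except the current two is nearly full, and $|C|\le(1+\alpha^2)\kappa$.

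Your alternatives fail:
\begin{itemize}
\item Spreading a layer over several clusters breaks the requirement that each child lie in a cluster adjacent, in the right direction, to its parent's cluster.
\item It also forces vertices to be typical to many clusters. That is not affordable: each cluster excludes up to $\eps m$ vertices, and $\eps N$ need not be small (in the paper $N\ge 1/\eps$).
\item Water-filling along a bare path loses far more than one layer's rounding. If alternate layers differ greatly in size, the end receiving the large layers fills while its partner is almost empty. On a path that partner has no unused neighbour, so its capacity is stranded, possibly at each of $\Theta(\kappa)$ switches. The caterpillar's leaves exist exactly to absorb this.
\end{itemize}
With single-cluster layers, Step~4 also becomes easy. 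Each embedded vertex need only be typical to the one cluster hosting all its children, and $(d-\eps)\sqrt{\eps}\,m\gg\gamma k$ accommodates unbounded degrees.

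\textbf{Embedding the caterpillar in $R$.} The shape of $C$ is dictated by $T$; for instance, if odd layers are much larger than even ones, $C$ is nearly a star with about $\kappa$ leaves. So you must embed an \emph{arbitrary} antidirected caterpillar on about $(1+\alpha^2)\kappa$ vertices \emph{injectively} into $R$. The paper obtains this from the Stein--Trujillo-Negrete theorem, \cref{prop:ST_caterpillar}, applied directly to $R$ using $e(R)\ge(1+\alpha^2)\kappa|R|$.

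Your Step~2 cannot replace it. Trees in the split graph $B$ do not ``correspond exactly'' to antidirected trees in $R$: a caterpillar in $B$ may use both $i^+$ and $i^-$, and then cluster $V_i$ must absorb the loads of both copies, up to about $2m$. This factor-two loss is precisely why the conjecture does not follow from Erd\H{o}s--S\'os applied to the split graph. Moreover, minimum degree about $\kappa/2$ in $B'$ plus a single vertex of degree $\kappa$ does not let you greedily embed caterpillars of arbitrary shape on about $\kappa$ vertices. Deferring to ``whatever caterpillar lemma the paper proves'' leaves this central ingredient unspecified.
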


We remark that the two classes of trees given in \cref{thm:main_dense} and \cref{thm:balance_trees} are genuinely distinct. For example, an antidirected $\Delta$-ary tree with root $r\in V(T)$ clearly belongs to the class of antidirected trees with maximum degree at most $\Delta$. However, there are $\Delta^i$ vertices in $T$ of distance $i$ from $r$, and if $e(T) = k$ for large $k$, the number of vertices at furthest distance from $r$ is much larger than $\gamma k$. The same is true if we choose any other vertex of $T$ to be the root, and thus this tree is not covered by \cref{thm:balance_trees}. On the other hand, the tree obtained by taking a star with $\gamma k$ leaves, all directed outwards, and subdividing each of the arc $\gamma^{-1}$ times, maintaining antidirectedness, has maximum degree $\omega(1)$ and bounded layer size, seen by choosing the root to be the centre of the original star. Many similar examples can be constructed.

\subsection{Discussion of recent AI breakthrough on the Erd\H{o}s--S\'{o}s conjecture}
Whilst in the final stages of proofreading this manuscript, it was announced that GPT-6 Astra proved the Erd\H{o}s--S\'{o}s conjecture in full, using a counting argument.
We note that their proof also extends to proving \cref{conj:main} and give details of this generalisation in \cref{sec:AI}.
A Lean~4 formalisation of this generalisation is available at~\cite{DLean}.

As this work was done in parallel, we have not changed the paper to reflect this recent update, other than adding \cref{sec:AI}. In particular, the bounds obtained in \cref{sec:applications} as applications of \cref{thm:main_dense} can be generalised as well.

\textbf{Statement of AI use:}
AI was not used in the production of this manuscript. The only exception is the proof in~\cref{sec:AI}, which adapts
the AI generated proof of Erd\H{o}s--S\'{o}s conjecture. The adaptation was written entirely by the authors,
without any further use of AI. All other ideas in this paper were thought of and written entirely by the authors.

\section{Preliminaries}

\subsection{Notation}
Let $G$ be a graph. We write $|G| = |V(G)|$ and $e(G) = |E(G)|$ for the number of vertices and number of edges respectively. The minimum degree and maximum degree of $G$ are denoted by $\delta(G)$ and $\Delta(G)$. For $U\subseteq V(G)$, we write $N_G(U)$ for the union of all neighbours of vertices in $U$.

Let $D$ be a digraph with vertex set $V(D)$ and arc set $E(D)$, and let $x\in V(D)$. We write $|D| = |V(D)|$ and $e(D) = |E(D)|$.
The set of all in-neighbours and the set of all out-neighbours of $x$ in $D$ are denoted by $N_D^-(x)$ and $N_D^+(x)$ respectively, and their sizes are denoted by the in-degree $\deg_D^-(x)$ and out-degree $\deg_D^+(x)$ respectively. The minimum in-degree and maximum in-degree of $D$ are denoted by $\delta^-(D)$ and $\Delta^-(D)$, and analogously for out-degree with $\delta^+(D)$ and $\Delta^+(D)$. The maximum total degree of $D$, denoted by $\Delta(D)$, is the minimum $m\in \NN$ such that every vertex is contained in at least $m$ arcs.

We write $N_D(x) = N_D^-(x)\cup N_D^+(x)$ for the total neighbourhood of $x$ and $\deg_D(x) = |N_D(x)|$ for the total degree of $x$. 
Let $X\subseteq V(D)$. We write $N^+_D(X)$ for the union of all out-neighbours of vertices in $X$, and $N^-_D(X)$ and $N_D(X)$ are defined analogously.
We may omit subscripts where context is clear.
For convenience, we use `in' and `$-$' interchangeably, and do the same for `out' and `$+$'.

A digraph $D$ is \defi{oriented} if there is at most one arc between every pair of vertices. $D$ is \defi{antidirected}
if it is oriented and contains no directed path of length two. Note that this is equivalent to all vertices of $D$ having in-degree $0$ or out-degree $0$. We say that a vertex $v\in V(D)$ is a \defi{source}, or an \defi{out-vertex}, if it has no in-neighbours, and similarly it is a \defi{sink}, or an \defi{in-vertex} if it has no out-neighbours.

Given a rooted tree $T$ (either directed or undirected), we denote the root of $T$ by $r(T)$.

A \defi{caterpillar} is a tree that consists of a path, which we usually call the \defi{spine}, with leaves attached to it.

\subsection{Short sketch}

In order to prove \cref{thm:main_dense,thm:balance_trees}, we use the diregularity lemma (see \cref{sec:diregularity}), to obtain a reduced graph $R$ of a similar density to $D$, in particular $e(R)\geq (1+\alpha')k|R|$ for some $\alpha'\in (0,\alpha)$. As mentioned earlier, Stein and Trujillo-Negrete proved that \cref{conj:main} holds for $k$-arc antidirected caterpillars.

\begin{proposition}[\cite{stein_trujillo-negrete2025}]\label{prop:ST_caterpillar}
	Every digraph $D$ on $n$ vertices with more than $(k-1)n$ arcs contains every antidirected caterpillar 
	with $k$ arcs. 
\end{proposition}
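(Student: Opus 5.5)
Since this proposition is quoted from Stein and Trujillo-Negrete, the plan is to reprove it by the standard reduction to a minimum-degree condition followed by a greedy embedding along the spine. The approach has three steps.

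\emph{Step 1: reduce to a subdigraph with large degrees.} Suppose $e(D)>(k-1)n$. We delete vertices one at a time. A vertex $v$ is deleted whenever $\deg^+(v)+\deg^-(v)\le k-1$ in the current digraph, counting arcs with multiplicity, so a double arc counts twice. Each deletion removes at most $k-1$ arcs and exactly one vertex, so the ratio $e/|V|>k-1$ is preserved. Hence the process stops at a nonempty subdigraph $D'$ in which every vertex $v$ satisfies $\deg^+(v)+\deg^-(v)\ge k$.

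This alone does not give large in-degree or large out-degree separately. I would therefore refine it using the bipartite double cover $B(D)$. This is the bipartite graph with parts $V^+$ and $V^-$, in which $u^+v^-$ is an edge whenever $uv\in E(D)$. It has $2n$ vertices and more than $(k-1)n$ edges, so its average degree exceeds $k-1$.

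Now observe the key property of an antidirected tree $T$: it is bipartite, with all sources in one class and all sinks in the other. Consequently, an embedding of $T$ into $D$ corresponds to an embedding of the underlying undirected tree into $B(D)$ that respects the sides: sources go to $V^+$ and sinks go to $V^-$. This embedding must additionally be injective on the \emph{projection} to $V(D)$, meaning that $u^+$ and $u^-$ may not both be used.

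Pruning $B(D)$ in the same way yields a subgraph $H$ of minimum degree at least $k/2$, with average degree above $k-1$.

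\emph{Step 2: embed the caterpillar greedily along the spine.} Write the caterpillar as a spine $s_1\dots s_\ell$ with leaf sets $L_i$. I would embed the spine as a path in $H$, placing each $s_i$ on the side of $H$ dictated by its orientation. The leaves of $s_i$ are then embedded into unused neighbours of the image of $s_i$.

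The count needed at $s_i$ is $|L_i|$ leaves, plus the next spine vertex, plus the vertices already used. Summing over the whole caterpillar, this totals at most $k$.

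Minimum degree $k/2$ does not suffice for a naive greedy step. Instead I would use the classical Erdős–Sós-for-caterpillars trick. Choose $H$ to contain a vertex of degree at least $k$ on each side, which is possible because the average degree exceeds $k-1$. Then order the spine so that the \emph{heavy} spine vertices, namely those with many leaves, are mapped to high-degree vertices. Concretely, I would alternate: embed the spine from one end, and at each step pick the image of $s_{i+1}$ among the neighbours of the image of $s_i$ with the larger remaining degree. This follows the argument of Mclennan and others for caterpillars in the undirected Erdős–Sós setting.

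\emph{Main obstacle.} The hardest step is the projection injectivity. A vertex $u$ of $D$ has two copies $u^+$ and $u^-$ in $H$, and using one forbids the other. This potentially doubles the number of blocked vertices, which destroys the exact bound $k-1$.

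To handle it, I would charge each used vertex of $D$ once against the count of available neighbours. A neighbour $w$ of $u^+$ is blocked either if $w^-$ is used or if $w^+$ is used. However, the number of used vertices of $D$ is still at most the number of tree vertices already placed, so the count of blocked neighbours remains at most the number of tree vertices already embedded. This needs care with double arcs, where $w$ is adjacent to $u$ in both directions.

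If this charging fails, I would fall back on the minimal counterexample structure, case-splitting on whether $D$ is oriented.
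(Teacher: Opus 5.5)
There is a genuine gap in Step~2, and that step carries the whole difficulty of the statement.

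After pruning, all you know locally is that every vertex of $H$ has degree at least $k/2$, plus the existence of some vertex of degree at least $k$. Even that high-degree vertex is forced only in the smaller part of the bipartite graph $H$, not on each side as you claim. A greedy spine-by-spine embedding needs, at the image of $s_i$, about $|L_i|+1$ free neighbours plus room for the already used vertices, and the latter can approach $k$. So minimum degree $k/2$ certifies only about $k/2$ greedy steps.

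This already fails for the simplest caterpillar, the antidirected path with $k$ arcs, where your Step~2 is just greedy path extension. Suppose a vertex $v$ is joined in both directions to two disjoint complete symmetric digraphs on $k/2+1$ vertices each, with the global arc count boosted by a disjoint dense part. Then $v$ has degree at least $k$. Starting from $v$, the rule ``move to the neighbour of larger remaining degree'' can enter one of the two cliques and get stuck after about $k/2$ arcs, and nothing in your proposal prevents this. Even undirected Erd\H{o}s--Gallai, the path case of Erd\H{o}s--S\'os, is not proved greedily: it needs a longest-path argument that uses the edge count globally. Your proposal has no ingredient playing that role, and the closing fallback to a minimal counterexample is not an argument.

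The projection issue you single out as the main obstacle is in fact harmless. All $H$-neighbours of $u^+$ lie in $V^-$, so each used vertex of $D$ blocks at most one of them, and your charging is correct. But this only shows that $H$ behaves like $D$ for greedy counting; it does not supply the missing degree.

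The paper only quotes this proposition from Stein and Trujillo-Negrete, but it is a special case of Theorem~\ref{thm:AI}. That proof avoids degree conditions entirely. It double counts the $\diamond$-marked pairs $(\pi,j)$, of which there are exactly $(n-1)!\,e(D)$, and proves $M(D)\le C^{\diamond_r}(T,r)+(t-2)n!$ by induction on $T$:
\begin{itemize}
\item a leaf root is handled by the reversal $\pi\mapsto\pi_\ell$ that brings $v_{j_\ell}$ to the front;
\item a non-leaf root is handled by splitting $T=T_1\cup T_2$ at $r$ and injecting all but at most $n!$ of the pairs containing $T_1$ but not $T$ into the marked pairs not containing $T_2$.
\end{itemize}
This global averaging over permutations is exactly what replaces the longest-path ingredient your greedy scheme lacks.
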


We will apply this result to $R$. Roughly speaking, it remains to show that, for every antidirected tree~$T$ satisfying either of the descriptions in \cref{thm:main_dense} or \cref{thm:balance_trees}, there exists a caterpillar $C$ on roughly~$k|R|/|D|$ vertices such that there is a homomorphism from $T$ to $C$ in which not too many vertices of $T$ are mapped to the same vertex in $C$. We will then use the properties of the regular pairs in $R$ to find the embedding of $T$ in $D$. In order to find this homomorphism,  we need to partition $T$ into small pieces that will be used to determine the construction of the caterpillar. For trees with bounded layer size, as in \cref{thm:balance_trees}, we use the decomposition into layers. For bounded degree trees, we use the following standard tree-separating lemma (see \cite{BPS1}).

\begin{lemma}[Proposition 4.1 in \cite{BPS1}]
  \label{lemma:cutting_1}
  For every $\beta > 0$, and for all rooted trees $T$ on $k$ edges,
  there is a set $S \subseteq V(T)$ and a family $\mathcal{T}$ of disjoint
  rooted trees such that 
  \begin{enumerate}[label = \upshape{(\alph*)}]
    \item $r(T) \in S$;
    \item $\mathcal{T}$ consists of the connected components of $T - S$, and every $T'\in \mathcal{T}$
          is rooted at the vertex closest to $r(T)$;
    \item $|T'| \leq \beta k$, for every $T' \in \mathcal{T}$; and
    \item $|S| \leq \frac{1}{\beta} + 2$.
  \end{enumerate}
\end{lemma}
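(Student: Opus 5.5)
We prove \cref{lemma:cutting_1} by a greedy bottom-up cutting procedure. The whole argument is driven by the sizes of the subtrees hanging below each vertex.

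Root $T$ at $r=r(T)$, and for each vertex $v$ let $T_v$ denote the subtree consisting of $v$ and all its descendants. We build $S$ iteratively, starting from $S_0=\{r\}$ and working in the current forest of pieces not yet separated. At each step, consider the component $F$ of $T-S$ still containing vertices of size more than $\beta k$ (if none exists, we stop). Root $F$ at its vertex closest to $r$, and choose a vertex $v\in F$ that is deepest subject to $|F_v|>\beta k$, where $F_v$ is the subtree of $F$ below $v$. By this choice every child $c$ of $v$ in $F$ satisfies $|F_c|\le \beta k$. Add $v$ to $S$.

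Removing $v$ creates the components $F_c$ for the children $c$ of $v$, all of size at most $\beta k$, together with the remainder $F-F_v$. The remainder is processed again if it is still large.

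To bound $|S|$, note that each added vertex $v$ (other than $r$) detaches from the large part a set $F_v$ of more than $\beta k$ vertices. These detached sets are pairwise disjoint, since each is removed from further consideration apart from the small child components, which are never cut again. Since $T$ has $k+1$ vertices, the number of added vertices is less than $(k+1)/(\beta k)\le 1/\beta+1/(\beta k)$. This gives $|S|\le 1/\beta+2$, at least once $\beta k\ge 1$.

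The degenerate case $\beta k<1$ needs separate treatment. There, taking $S=V(T)$ gives $|S|=k+1<1/\beta+1$, so the bound holds trivially with $\mathcal T=\emptyset$.

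The remaining properties are immediate. Property (a) holds because $r\in S_0$. Property (b) holds by defining $\mathcal T$ to be the components of $T-S$, each rooted at its vertex closest to $r$. Property (c) holds because the process only terminates once every component has at most $\beta k$ vertices.

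The main point requiring care is the disjointness in the counting step. One must check that the chosen $v$ lies in the current component containing large material and that the sets $F_v$ charged to distinct cut vertices really do not overlap. This follows from always recomputing $F$ in $T-S$, after which the bound is routine.
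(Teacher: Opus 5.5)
Your greedy argument is correct and is essentially the standard proof of Proposition~4.1 in \cite{BPS1}; the paper itself imports that result without proof. The argument repeatedly cuts at a deepest vertex whose subtree, inside a component of $T-S$ that is still too large, has more than $\beta k$ vertices, and charges each cut to the pairwise disjoint sets it detaches. Two small remarks: several components of $T-S$ can be large at once (already for $S_0=\{r\}$), so ``the component $F$'' should read ``any such component'', which your disjointness argument handles unchanged. Also, since the detached sets avoid $r$, you get $|S|<1/\beta+1$ directly, and the case $\beta k<1$ needs no separate treatment.
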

  We say that such a pair $(S,\cT)$ is a \defi{$\beta$-decomposition} of $T$. 

\subsection{Diregularity}\label{sec:diregularity}
Let~$D$ be a digraph, and let $X,Y \subseteq V(D)$ be nonempty and disjoint. We let
\[
  \text{$d^{+}(X,Y) := \frac{|(X\times Y)\cap E(D)|}{|X||Y|}$ \ and \ $d^{-}(X,Y):=d^+(Y,X)$.}
\]
 For~$\eps > 0$ we say~${X' \subseteq X}$
is~\defi{$\eps$\nobreakdash-significant} if~${|X'| \geq \eps|X|}$.
For~${\diamond \in \{+,-\}}$, the pair~$(X,Y)$
is~\defi{$(\eps,\diamond)$-regular}
if~${|d^{\diamond}(X,Y) - d^{\diamond}(X',Y')| \leq \eps}$ for
all~{$\eps$\nobreakdash-significant} subsets~${X' \subseteq X}$
and~${Y' \subseteq Y}$. If furthermore ${d^{\diamond}(X,Y) \geq d}$ for
some~${d\geq 0}$, we call~$(X,Y)$ \defi{$(\eps,\diamond,d)$-regular}. Given an $(\eps,\diamond,d)$-regular pair $(X, Y)$ and an $\eps$-significant $Y'\subseteq Y$, a vertex $v\in X$ is called $\diamond$-\defi{typical} to $Y'$ if $d^\diamond(\{v\},Y')\geq d-\eps$. It is well-known that between a regular pair, almost all vertices are typical to any significant subset of the opposite side.

\begin{fact} [see \cite{komlos2000regularity}] \label{fact:typical} 
    Let $(X, Y)$ be an $(\eps,\diamond,d)$-regular pair, and let $\eps\leq \delta \leq 1/2$. Then the following holds:
    \begin{enumerate}[label = \upshape{(\alph*)}]
        \item For each $\eps$-significant set $Y'\subseteq Y$, all but at most $\eps|X|$ vertices of $X$ are $\diamond$-typical to $Y'$.
        \item For all $\delta$-significant sets $X' \subseteq X$ and $Y' \subseteq Y$, the pair $(X',Y')$ is $(\frac{\eps}{\delta}, \diamond, d-\eps)$-regular.
    \end{enumerate}
\end{fact}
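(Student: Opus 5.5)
Both parts follow directly from the definition of $(\eps,\diamond,d)$-regularity by a counting argument. It suffices to treat $\diamond=+$; the case $\diamond=-$ is identical after reversing all arcs, since $d^-(X,Y)=d^+(Y,X)$.

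For (a), let $Y'\subseteq Y$ be $\eps$-significant. Let $B\subseteq X$ be the set of vertices that are not $+$-typical to $Y'$, so that every $v\in B$ satisfies $d^+(\{v\},Y')<d-\eps$. Suppose, for a contradiction, that $|B|>\eps|X|$. Then $B$ is $\eps$-significant. Averaging over $v\in B$ gives
\[
d^+(B,Y')=\frac{1}{|B|}\sum_{v\in B}d^+(\{v\},Y')<d-\eps .
\]
On the other hand, regularity applied to the significant pair $(B,Y')$ gives $d^+(B,Y')\geq d^+(X,Y)-\eps\geq d-\eps$. These two bounds contradict each other, so $|B|\leq\eps|X|$.

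For (b), let $X'\subseteq X$ and $Y'\subseteq Y$ be $\delta$-significant. The density bound comes from regularity of $(X,Y)$: since $\delta\geq\eps$, both sets are $\eps$-significant, so $|d^+(X',Y')-d^+(X,Y)|\leq\eps$ and in particular $d^+(X',Y')\geq d-\eps$.

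For regularity of $(X',Y')$ with parameter $\eps'=\eps/\delta$, take $\eps'$-significant subsets $X''\subseteq X'$ and $Y''\subseteq Y'$. Then
\[
|X''|\geq\frac{\eps}{\delta}|X'|\geq\frac{\eps}{\delta}\,\delta|X|=\eps|X|,
\]
and likewise $|Y''|\geq\eps|Y|$. So $X''$ and $Y''$ are $\eps$-significant in the original pair. Regularity of $(X,Y)$ gives
\[
|d^+(X'',Y'')-d^+(X,Y)|\leq\eps \quad\text{and}\quad |d^+(X',Y')-d^+(X,Y)|\leq\eps .
\]
By the triangle inequality, $|d^+(X'',Y'')-d^+(X',Y')|\leq 2\eps$.

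We need this difference to be at most $\eps/\delta$. Since $\delta\leq1/2$, we have $\eps/\delta\geq2\eps$, and the bound follows. This is the only place the hypothesis $\delta\leq 1/2$ is used, and matching it to the factor $2$ from the triangle inequality is the one point needing care.

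If $\eps/\delta>1$, the significance condition is vacuous or trivially satisfied, since every density lies in $[0,1]$. This completes the plan.
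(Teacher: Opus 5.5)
Your proof is correct and is the standard argument behind this fact. The paper itself gives no proof and only cites Koml\'os et al. Part (a) averages over the set of atypical vertices and applies regularity to that set; part (b) observes that $\eps/\delta$-significant subsets of $X',Y'$ are $\eps$-significant in $X,Y$ and uses the triangle inequality with $2\eps\le\eps/\delta$. Your closing remark about the case $\eps/\delta>1$ is unnecessary, since the hypothesis $\eps\le\delta$ already gives $\eps/\delta\le 1$.
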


Szemerédi’s regularity lemma~\cite{szemeredi} states that every large graph can be partitioned into a bounded
number of vertex sets, most of which are pairwise $\eps$-regular. We will use the following analogue of Szemerédi’s result to digraphs, obtained by Alon and Shapira~\cite{alon2003testing}.

\begin{lemma}[Degree form of the diregularity lemma \cite{alon2003testing}]
  \label{lemma:diregularity}
  For every $0<\eps < 1$ and $m_0 \in \NN$, there are integers $M_0$ and $n_0$
  such that the following holds for all $n \geq n_0$ and $d \leq 1$.
  If $D$ is a digraph on $n$ vertices, then there is a partition of $V(D)$
  into $V_0,V_1,\dots,V_t$ and a spanning subgraph~$D'$ of~$D$ such that:
  \begin{enumerate}[label = \upshape{(\alph*)}]
    \item $m_0 \leq t \leq M_0$,
    \item $|V_0| \leq \eps n$ and $|V_1|=\dots=|V_t|$,
    \item $\deg_{D'}^{\diamond}(v) > \deg_{D}^{\diamond}(v) - (d+\eps)n$,
      for all $v \in V(D)$ and $\diamond \in \{+,-\}$,
    \item $V_i$ is an independent set in $D'$, for all $i \in [t]$, and
    \item for all $1 \leq i,j \leq t$, the ordered pair $(V_i,V_j)$ is
      $(\eps,+)$-regular in $D'$ with density either $0$ or at least~$d$.
  \end{enumerate}
\end{lemma}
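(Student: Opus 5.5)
The plan is to derive this degree form from the standard (non-degree) diregularity lemma of Alon and Shapira, following the Koml\'os--Simonovits cleaning argument for the undirected degree form. Given $\eps$ and $m_0$, I fix $\eps' \ll \eps$ (say $\eps' = \eps^4/100$) and $m_0' = \max\{m_0, \lceil 8/\eps\rceil\}$. I then apply the Alon--Shapira lemma with these parameters. It yields an exceptional set $V_0'$ with $|V_0'|\leq \eps' n$, clusters $V_1',\dots,V_t'$ of equal size with $m_0'\leq t\leq M_0'$, and the property that all but at most $\eps' t^2$ ordered pairs $(V_i',V_j')$ are $(\eps',+)$-regular.

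I define $D'$ from $D$ by deleting only the following arcs \emph{between clusters}, leaving every arc incident to $V_0'$ untouched:
\begin{enumerate}[label = \upshape{(\roman*)}]
  \item all arcs inside some $V_i'$;
  \item all arcs of irregular ordered pairs;
  \item all arcs of regular ordered pairs $(V_i',V_j')$ with $d^+(V_i',V_j') < d+\eps'$.
\end{enumerate}
Properties (d) and (e) then hold immediately, with densities of at least $d+\eps'$. The remaining work is to control, vertex by vertex, the loss of out- and in-degree. For deletions of type (i), every vertex loses at most $n/t\leq \eps n/8$. For type (ii), there are at most $\eps' n^2$ such arcs, so at most $O(\sqrt{\eps'})n$ vertices lose more than $\sqrt{\eps'}n$ of them; I call these vertices bad.

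Type (iii) needs regularity. In a regular pair $(V_i',V_j')$ of density below $d+\eps'$, fewer than $\eps'|V_i'|$ vertices of $V_i'$ have more than $(d+2\eps')|V_j'|$ out-neighbours in $V_j'$. Otherwise those vertices would form a significant set with density exceeding the pair density by more than $\eps'$. The in-direction is handled symmetrically. Summing over pairs, the number of (vertex, pair) incidences where a vertex is atypically dense is at most $2\eps' t\, n$. Hence at most $\sqrt{\eps'}n$ vertices are atypically dense in more than $2\sqrt{\eps'}t$ pairs, and I add these to the bad vertices. Every vertex that is not bad loses at most $(d+2\eps')n + 2\sqrt{\eps'}t\cdot n/t$ in each direction from type (iii), so its total loss is below $(d+\eps)n$.

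Finally I move all bad vertices into $V_0$. Moving a vertex into $V_0$ restores all of its arcs, so the degree condition (c) then holds for every vertex, including those in $V_0$. To restore equal cluster sizes, I remove further vertices from each cluster until all clusters have the size of the smallest one, and put these into $V_0$ as well. The total number of removed vertices is $O(\sqrt{\eps'})n$, so $|V_0|\leq \eps n$. Each new $V_i$ retains at least a $(1-\eps^2)$-fraction of $V_i'$, and arcs between kept pairs are unchanged. By \cref{fact:typical}(b), kept pairs remain $(\eps,+)$-regular with density at least $d+\eps'-\eps'\geq d$, while deleted pairs have density $0$.

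I expect the main obstacle to be the per-vertex degree bound. It requires controlling, simultaneously for in- and out-degrees, the irregular and sparse pairs incident to a single vertex. The choice of $\eps'$ as a sufficiently small power of $\eps$ is exactly what makes the double-counting argument above leave only $O(\sqrt{\eps'})n$ bad vertices.
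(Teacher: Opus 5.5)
The paper gives no proof of this lemma; it is quoted from Alon and Shapira. Your route, applying the standard diregularity lemma with $\eps'\ll\eps$ and then cleaning up as in Koml\'os--Simonovits, is the usual way the degree form is derived. Most of your argument checks out:
\begin{itemize}
  \item the three kinds of deletions give (d) and (e);
  \item the regularity argument bounding atypically dense vertices in sparse regular pairs is correct in both directions;
  \item the double count gives at most $3\sqrt{\eps'}n$ bad vertices, and every other vertex loses fewer than $(d+\eps)n$ arcs in each direction;
  \item moving a vertex into $V_0$ and restoring its arcs only increases the degrees of all other vertices.
\end{itemize}

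The gap is in the equalisation step. You trim every cluster to the size of the smallest one. You then assert that only $O(\sqrt{\eps'})n$ vertices are removed and that each cluster keeps a $(1-\eps^2)$-fraction of itself. Nothing prevents the bad vertices from concentrating in a few clusters. A cluster has only about $n/t$ vertices, and $t$ can be as large as $M_0'(\eps')$, so $3\sqrt{\eps'}n$ bad vertices can fill whole clusters. For example, suppose $t>2/\eps'$ and $V_1'$ forms irregular pairs with all other clusters. That is only $2(t-1)<\eps' t^2$ ordered pairs, which the lemma permits. If every vertex of $V_1'$ has more than $\sqrt{\eps'}n$ arcs in these pairs, then all of $V_1'$ is bad. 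The smallest cluster is then empty, and trimming to its size destroys the partition.

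The standard fix is as follows. Let $B$ be the set of bad vertices and $m'$ the common size of the $V_i'$. Call a cluster \emph{spoiled} if more than a $\theta$-fraction of it lies in $B$, with say $\theta=(\eps')^{1/4}$.
\begin{itemize}
  \item There are fewer than $|B|/(\theta m')\leq 4(\eps')^{1/4}t$ spoiled clusters, containing fewer than $3(\eps')^{1/4}n$ vertices in total. Move them wholesale into $V_0$, restoring their arcs as before.
  \item Each remaining cluster has at least $(1-\theta)m'$ vertices outside $B$. Shrink each to exactly $\lfloor(1-\theta)m'\rfloor$ such vertices, at a cost of at most $\theta n+t$ further vertices.
\end{itemize}
For this to give $|V_0|\leq\eps n$, you need $\eps'$ somewhat smaller than your choice, e.g.\ $(\eps')^{1/4}\leq\eps/10$. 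Because clusters are now discarded, you also need $m_0'\geq 2m_0$ to keep $t\geq m_0$. Fact~\ref{fact:typical}(b) with $\delta=1/2$ then shows that the surviving pairs are $(2\eps',+)$-regular with density at least $d$, and the degree condition (c) holds as in your argument.
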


Let $D$ be a digraph on $n$ vertices, let~${\eps, d < 1}$, and let $V_0,\dots,V_r$ be a partition
of $V(D)$ as in Lemma~\ref{lemma:diregularity}. An \defi{$(\eps,d)$-reduced digraph} $R$
of $D$ with respect to $V_0,\dots,V_t$ is the digraph with vertex set~${V(R) := \{V_1,\dots,V_t\}}$, and arc set $E(R):=\{(V_i, V_j)|\text{ the pair~$(V_i,V_j)$ is~$(\eps,+,d)$-regular}\}$. We refer to the vertices of $R$ as \defi{clusters}. It is commonly known that the reduced graph inherits several properties from $D$, arc density being one. 

\begin{fact}
    \label{fact:reg_edges}
    Let $0<3\eps\leq d\leq \rho/2$. If~$D$ is a digraph on~$n$ vertices with at least $\rho n^2$ arcs and~$R$ is
  an~$(\eps, d)$-reduced digraph of~$D$,
  then $R$ has at least $(\rho - 2d)|R|^2$ arcs. 
\end{fact}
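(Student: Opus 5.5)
We count arcs of $D$ not retained as arcs of $R$, and show this is at most $2d|R|^2$ measured in cluster-pair units. Let $D'$ and $V_0,V_1,\dots,V_t$ be as in \cref{lemma:diregularity}, applied with parameters $\eps$ and $d$, and write $m=|V_1|=\dots=|V_t|$, so $tm\leq n$ and $|R|=t$. Every arc of $D$ falls into one of the following classes:
\begin{enumerate}[label = \upshape{(\roman*)}]
\item arcs of $D$ that are not in $D'$;
\item arcs of $D'$ incident to $V_0$;
\item arcs of $D'$ lying inside some $V_i$;
\item arcs of $D'$ from $V_i$ to $V_j$, with $i\neq j$ both in $[t]$.
\end{enumerate}

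We bound each class in turn.
\begin{itemize}
\item Class (i): by property (c), each vertex loses fewer than $(d+\eps)n$ out-arcs. Summing over all vertices, class (i) has fewer than $(d+\eps)n^2$ arcs.
\item Class (ii): each vertex of $V_0$ lies on at most $2n$ arcs. Since $|V_0|\leq \eps n$, class (ii) has at most $2\eps n^2$ arcs.
\item Class (iii): this class is empty by property (d).
\item Class (iv): by property (e), a pair $(V_i,V_j)$ with positive density in $D'$ has density at least $d$. It is therefore $(\eps,+,d)$-regular, and so $(V_i,V_j)\in E(R)$. Each such pair carries at most $m^2$ arcs, so class (iv) has at most $e(R)m^2$ arcs.
\end{itemize}

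Combining the four bounds gives
\[
\rho n^2\leq e(D) < (d+3\eps)n^2 + e(R)m^2 .
\]

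It remains to convert this into a bound in terms of $|R|$. Since $m\leq n/t$, we have $e(R)m^2\leq e(R)n^2/t^2$. Rearranging, and using $3\eps\leq d$,
\[
e(R) > (\rho-d-3\eps)\,t^2 \geq (\rho-2d)\,t^2 = (\rho-2d)|R|^2 .
\]
This is the claimed bound.

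The only delicate points are bookkeeping, and there is no real obstacle. First, $m\leq n/t$ must be used in the correct direction: it is applied to the class (iv) upper bound, so it only helps. Second, we must check that every arc of $D$ is counted, including loops or arcs within $V_0$, which class (ii) already covers. Third, there is a subtlety if property (e) only guarantees regularity in $D'$ rather than $D$. The reduced digraph is defined via $(\eps,+,d)$-regularity of $D'$, so we interpret $R$ with respect to $D'$, as is standard; then class (iv) pairs are exactly arcs of $R$.

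The hypothesis $d\leq\rho/2$ serves only to make the bound nontrivial, and is not needed for the inequality itself.
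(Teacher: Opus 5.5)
Your argument is correct. The paper gives no proof of this fact (it cites it as commonly known), and what you give is the standard counting behind it: fewer than $(d+\eps)n^2$ arcs lost by property (c), at most $2\eps n^2$ arcs at $V_0$, none inside clusters by (d), and at most $m^2\le n^2/|R|^2$ arcs for each ordered pair of positive density in $D'$, each such pair being an arc of $R$ by (e). Your reading of $R$ as defined via regularity in $D'$ is the intended one, and you are right that $d\le\rho/2$ is not needed for the inequality.
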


\section{Proof of Theorem~\ref{thm:main_dense}}

\textbf{Step 0 (Setting up)}.
Given $\alpha > 0$ and $\Delta\in \NN$, we introduce new constants $k_0, \beta, \eps$ and $d$ such that
\[  
1/k_0 \ll \beta \ll \eps \ll d \ll \alpha,\Delta\inv.
\]

Let $n \geq k> k_0$, and let $D$ be a digraph on $n$ vertices with at least $(1+\alpha)kn$ arcs
where $k\geq \alpha n$, and let~$T$ be an antidirected tree on $k$ arcs with~${\Delta(T)\leq \Delta}$.

\textbf{Step 1 (Preparing the host digraph)}. Apply the diregularity lemma (\cref{lemma:diregularity}) to $D$ with $\eps$ and~${m_0 = 1/\eps}$
to get a subgraph $D'$, and a reduced digraph $R$ on vertex set $\{V_1,\dots,V_t\}$. By \cref{fact:reg_edges} with~${\rho = (1+\alpha)k/n}$,
we have~${e(R)\geq (1+\alpha^2)k|R|^2/n}$. For convenience, let~${\kappa = k|R|/n}$ and $m$ be the size of the clusters in~$R$.

\textbf{Step 2 (Preparing the tree)}. Root $T$ at an arbitrarily chosen vertex $r(T) \in V(T)$.
Apply \cref{lemma:cutting_1} with $\beta$ to the underlying graph of $T$ to get a~$\beta$-decomposition~$(S,\cT)$, where $S\subseteq V(T)$ has size at most~$2/\beta$ and $\mathcal{T}$ consists of the connected components of $T-S$, each of which is rooted and has size at most $\beta k$. We refer to the vertices in $S$ as \defi{seeds}, and to the rooted subtrees $T'\in \mathcal{T}$ as the \defi{pieces} of the decomposition. The parent of a seed vertex $s\in S$ is the parent of $s$ in $T$, and the parent of a piece $T'\in \mathcal{T}$ is the unique vertex that is the parent of $r(T')$ in $T$. Note that since $\Delta(T)\leq \Delta$, and the root of each piece is the child of a seed, we have $|\mathcal{T}|\leq \Delta |S| \leq 2\Delta/\beta$.

\textbf{Step 3 (Finding a good assignment)}. In this step we find a homomorphism $\phi:V(T) \longrightarrow V(R)$ such that
\begin{equation}\label{eq:assignment}
    |\phi\inv(V_i)| < (1- \eps^{1/4})|V_i|\;\text{ for all } i \in [t].
\end{equation}

In order to find such homomorphism, we first find a rooted antidirected caterpillar $C$ and
a homomorphism~$\psi:V(T)\longrightarrow V(C)$ satisfying the following property for
all $v \in V(C)$, except for two vertices,
\begin{equation}
    \label{eq:full_cat}
    (1-\alpha^2/8)m \leq |\psi\inv(v)|\leq (1-\eps^{1/4})m.
\end{equation}

Note that this property, together with $m\geq (1-\eps)n/|R|$, implies
\begin{align*}
    k +1 = \sum_{v\in V(C)}|\psi\inv(v)| \geq (|C|-2)(1-\alpha^2/8)m\geq (|C|-2)(1-\alpha^2/4)\frac{k}{\kappa},
\end{align*}
which since $\kappa \geq \alpha|R|\geq \alpha m_0\geq \alpha/\eps \geq 4/\alpha^2$, gives 
\begin{equation}\label{eq:size_of_cat}
    |C|\leq \frac{\kappa}{1-\alpha^2/4}+3 \leq (1+\alpha^2/2)\kappa+2 \leq (1+\alpha^2)\kappa.
\end{equation}

We let $s := |S \cup \cT|$, and note that $s\leq 4\Delta/\beta$.
We construct $C$ and $\psi$ in $s$ steps. Let $(\pi_0,\dots,\pi_s)$ be an ordering of~${S \cup \cT}$ such
that $\pi_0 = r(T)$ and $T[\cup_{j \leq i} \pi_j]$ is connected, for all $i \leq s$. We define $T_i := T[\cup_{j \leq i} \pi_j]$,
for all $0 \leq i \leq s$.

We construct a sequence of antidirected caterpillars
$C_0\subseteq C_1\subseteq \dots \subseteq C_s$. Each $C_i$ consists of an antidirected path $P_i$, called \defi{spine}, with attached leaves.
For each $i\in [s]$, we will also find a pair of distinct vertices $x_i,y_i\in V(C_i)$ such that:
\begin{enumerate}[label = \textbf{(S\arabic*)}]
    \item $x_i$ and $y_i$ are adjacent in $C_i$,\label{S1}
    \item $x_i$ is an end point of $P_i$, and\label{S2}
    \item $y_i$ is a leaf attached to $P_i$.\label{S3}
\end{enumerate}
We call $x_i$ the \defi{current tip} and $y_i$ the \defi{current leaf}.

We also construct a sequence of homomorphisms $\psi_0,\dots,\psi_s$ such that $\psi_i:V(T_i)\longrightarrow V(C_i)$ satisfies the following for all $v\in V(C_i)$,
\begin{equation}
    \label{eq:space_in_cat}
    |\psi_i\inv(v)|\leq (1-2\eps^{1/4})m + i\Delta^{10\Delta/\beta},
\end{equation}
and for all $v\in V(C_i)\setminus \{x_i,y_i\}$, we have
\begin{equation}
    \label{eq:full_cat_i}
    |\psi_i\inv(v)|\geq (1-3\eps^{1/4})m.
\end{equation}

Furthermore, we will have $\psi_i|_{V(T_j)} = \psi_j$ for all $j<i$, $|C_i|\leq 2(i+1)$ and, if $u\in V(T_i)$ is a source (resp.\ sink), then $\psi_i(u)$ is a source (resp.\ sink) in $C_i$. We also ensure that if $a$ and $b$ are adjacent vertices in $T$, then the clusters $\psi(a)$ and $\psi(b)$ are adjacent in $R$.

We may assume that $r(T)$ is a source in $T$ (if it is a sink, we do as follows with all orientations reversed).

We start with $C_0$ consisting of an arc $x_0y_0$ (directed from $x_0$ to $y_0$), and think of $C_0$ as having spine $P_0 = \{x_0\}$ and $y_0$ as a leaf attached. 
The homomorphism~${\psi_0:V(T_0) \longrightarrow V(C_0)}$ is defined by setting~${\psi_0(\pi_0) = x_0}$.
Clearly, the caterpillar $C_0$ and $\psi_0$ satisfy~\eqref{eq:space_in_cat}~and~\eqref{eq:full_cat_i}.
Moreover, $x_0$ and $y_0$ satisfy~\ref{S1}, \ref{S2} and~\ref{S3}.

Now, suppose that we have already constructed $C_i$ and $\psi_i$ for $0\leq i\leq s-1$. Let $p$ be the parent of~$\pi_{i+1}$ in~$T$.
First, assume that $\pi_{i+1}$ is a seed, i.e.~$\pi_{i+1} \in S$. We let $C_{i+1} := C_i$, $P_{i+1} := P_i$, $x_{i+1} := x_i$,
and $y_{i+1} := y_i$. We extend $\psi_i$ by
letting $\psi_{i+1}(\pi_{i+1})$ be an arbitrary neighbour of $\psi_i(p)$ in $C_i$. Note that~\eqref{eq:space_in_cat}~and~\eqref{eq:full_cat_i}
hold for $C_{i+1}$ and $\psi_{i+1}$. 
Thus we can assume that $\pi_{i+1}$ is not a seed, i.e.~$\pi_{i+1} \in \cT$.

We now construct a caterpillar $C_{i+1}$ from $C_i$ depending on how many vertices have already been embedded to each of $x_i$ and $y_i$. We assume that the antidirected property of the caterpillar is maintained, i.e.,\ in what follows, if a new vertex is added that is adjacent to a sink (resp.\ source), then the arc is oriented towards the sink (resp.\ source). In the following, we say that a vertex $w$ of $C_i$ is
\defi{full} if $|\psi_i^{-1}(w)| > (1-3\eps^{1/4})m$.

\begin{enumerate}[label = \textbf{Case \arabic*:}, leftmargin = \widthof{Case0000}]
    \item The current tip and leaf are not full.   

       In this case, the new caterpillar is the same as before, i.e.~we let $x_{i+1} := x_{i}$, $y_{i+1} := y_i$, $C_{i+1} := C_i$ and $P_{i+1} := P_i$.  

        \vspace{0.3cm}
        
    \item Only the current leaf is full.

        We add a new leaf to $C_i$ and update the current leaf. That is, we
        define $C_{i+1}$ by adding a new vertex $a$ that is adjacent to $x_i$, and let $x_{i+1} := x_i$ and $y_{i+1} := a$.
        Since we only added a new leaf to $P_i$ in order to obtain $C_{i+1}$,
        the spine of the caterpillar is unchanged, i.e. $P_{i+1} := P_i$.
    
        \vspace{0.3cm}
    
    \item Only the current tip is full.

        We use the current leaf to extend the spine.
        Thus, define $C_{i+1}$ by adding a new vertex~$a$ that is adjacent to~$y_i$, and let~$x_{i+1} := y_i$ and $y_{i+1} := a$.        
        The new caterpillar $C_{i+1}$ has spine $P_{i+1}$ obtained from $P_i$ by adding $y_i$ and the corresponding arc between $x_i$ and $y_i$.

        \vspace{0.3cm}
    
    \item The current tip and leaf are full.

        Here, we have to create a new current tip and a new leaf. Hence,
        define $C_{i+1}$ by adding a new vertex $a$ that is adjacent to $y_i$ and an additional vertex~$b$ that is adjacent to $a$, and let $x_{i+1} := a$ and $y_{i+1} := b$. The new caterpillar $C_{i+1}$ has spine $P_{i+1}$ obtained from
        $P_i$ by appending $y_i$ and then $a$ to the tip $x_i$ of $P_i$.
\end{enumerate}

Note that we obtain $C_{i+1}$ either by attaching a new leaf to $P_i$ (Case 2), by
extending $P_i$ so that a leaf adjacent to the current tip becomes the new tip (Case 3), or by performing
both operations (Case~4). By~\ref{S1}, \ref{S2} and~\ref{S3}, in each case, $C_{i+1}$ consists of a spine $P_{i+1}$
with leaves attached to it, so $C_{i+1}$ is an antidirected caterpillar. Also, 
note that the current tip and leaf satisfy~\ref{S1}, \ref{S2} and~\ref{S3}.
Furthermore, in every case neither $x_{i+1}$ nor $y_{i+1}$ are full, while
every $v \in V(C_{i+1})\setminus \{x_{i+1},y_{i+1}\}$ is full.
Since $\psi_{i+1}$ will be an extension of $\psi_i$, it will be clear that~\eqref{eq:full_cat_i} holds for $i+1$. Finally, since $C_{i+1}$ contains at most two more vertices than $C_i$, we have $|C_{i+1}|\leq 2(i+2)$.

Now we will extend $\psi_{i}$ to get $\psi_{i+1}$. 
Let $Q$ be the shortest path from $\psi_{i+1}(p)$ to $x_{i+1}$ in $C_{i+1}$ and write $Q = z_0z_1\dots z_q$ so that $z_0 = \psi_{i+1}(p)$ and $z_q = x_{i+1}$. For $1\leq j\leq q-1$, let $L^j$ denote the set of vertices in $\pi_{i+1}$ at distance $j$ from $p$ in $T$. 

We assign all vertices in $L^j$ to the vertex $z_j$ in $Q$. If $p$ is a source in $T$ then all vertices at even (resp.\ odd) distance to $p$ in $T$ are sources (resp.\ sinks), and since these vertices of $\pi_{i+1}$ will be mapped to $z_j$ for some even $j$ (resp.\ odd $j$) and $Q$ is an antidirected path, then clearly $\psi_{i+1}(u)$ is a source if and only if $u$ is a source, for all $u\in \bigcup_{j \leq q-1} L^j$. The same holds if $p$ is a sink.

For all remaining vertices $u\in V(\pi_{i+1}) \setminus \bigcup_{j\leq q-1} L^j$, if $u$ is a source, then $u$ is assigned to whichever of $x_{i+1}$ or $y_{i+1}$ is a source in $C_{i+1}$, and if $u$ is a sink, then it is assigned to the one that is a sink in $C_{i+1}$. It is clear that arcs are preserved under this assignment and thus $\psi_{i+1}$ is a homomorphism of $T_{i+1}$.

Note that, for $v \in \{x_{i+1}, y_{i+1}\}$, we have
\[
    |\psi^{-1}_{i+1}(v)| \leq |\psi^{-1}_{i}(v)| + |\pi_{i+1}| \leq (1-3\eps^{1/4})m + \beta k \leq (1-2\eps^{1/4})m,
\]
which satisfies~\eqref{eq:space_in_cat}. Now, let $z \in V(C)\setminus \{x_{i+1},y_{i+1}\}$.
There are at most $\max_{j\leq {q-1}}|L^{j}|\leq \Delta^{|Q|}\leq \Delta^{2(i+2)}\leq \Delta^{10\Delta/\beta}$ vertices in $V(T_{i+1}) \setminus V(T_i)$ that get mapped to $z$ via $\psi_{i+1}$. So,
\[
    |\psi^{-1}_{i+1}(z)| \leq |\psi^{-1}_{i}(z)| + \Delta^{10\Delta/\beta} \leq (1-2\eps^{1/4})m + (i+1)\Delta^{10\Delta/\beta},
\]
which satisfies~\eqref{eq:space_in_cat}. Therefore, $\psi_{i+1}$ and $C_{i+1}$ satisfy all of the desired properties. At the end of this process, we let $C := C_s$ and $\psi := \psi_s$. Note that, since $s \leq 4\Delta/\beta$, we
have that
\[
    |\psi\inv(v)| \leq (1-2\eps^{1/4})m + s\Delta^{10\Delta/\beta}<(1-\eps^{1/4})m.
\]

Finally, by~\eqref{eq:size_of_cat}, we apply \cref{prop:ST_caterpillar} to find an embedding $\tau$ of $C$ in $R$ and set $\phi = (\tau \circ \psi)$.
A moment of thought reveals that $\phi$ satisfies~\eqref{eq:assignment}.

\textbf{Step 4 (Embedding of $\boldsymbol{T}$)}.
This step follows the usual embedding of trees using the regularity lemma (see, for instance,~\cite{Stein_Zarate-Gueren_2024} and~\cite{BPS1}). Below, we give a detailed sketch of the proof.
We embed $T$ following the
ordering $(\pi_0,\dots,\pi_s)$ of~${S \cup \cT}$. Recall that $\pi_0 = r(T)$
and $T_i = T[\cup_{j \leq i} \pi_j]$ is connected, for all $i \leq s$. 
For convenience, in this step, we treat each seed as a subtree of size one, which allows us
to write $V(\pi_\ell)$, for a seed $\pi_\ell \in S$. Moreover, given
an embedding $\sigma: V(H)\rightarrow V(D)$ of a digraph $H$ into $D$ and a set $X \subseteq V(D)$, we write
$\Free_{\sigma}(X) := X \setminus \sigma(V(H))$ for the set of vertices in $X$ that are
not used by $\sigma$.

For every $i \in [t]$, partition $V_i$ arbitrarily into two sets $V^{S}_i$ and $V^{P}_i$
with $|V_i^S| = \sqrt{\eps}m$. The sets~$V_i^{S}$ and~$V_i^{P}$ will be called the \defi{$S$-slice} and
the \defi{$P$-slice} of $V_i$, respectively. The $S$-slice of each cluster is used exclusively
to embed seeds, and the $P$-slice to embed vertices of the pieces of $\cT$. Note that the seeds
fit into the $S$-slices, since $|S| \leq 2/\beta \ll \sqrt{\eps}m$.
Moreover, by Fact~\ref{fact:typical} (b), we get, for every $\diamond \in \{+,-\}$ and 
every $V_j \in N^{\diamond}_R(V_i)$, that every pair $(X,Y)$ with $X \in \{V_i^{S},V_i^{P}\}$
and~$Y \in \{V_j^{S}, V_j^{P}\}$ is $(\sqrt{\eps}, \diamond,d/2)$-regular.

For every $0 \leq \ell \leq s$, we will show that there exists an embedding $\sigma_\ell: V(T_\ell) \rightarrow V(D)$ of
$T_\ell$ into $D$ such that the following holds for every $u \in V(T_\ell)$.
\begin{enumerate}[label = \textbf{(E\arabic*)}]
    \item if $u$ is a seed, then $\sigma_{\ell}(u)$ is in the $S$-slice of $\varphi(u)$; otherwise $\sigma_{\ell}(u)$ is in the $P$-slice of $\varphi(u)$. \label{E1}
    \item if $u$ is a $\diamond$-vertex for some $\diamond\in \{+,-\}$, then 
    \[
     \sigma_\ell(u) \textnormal{ is } \diamond\textnormal{-typical to } \Free_{\sigma_{\ell-1}}(Y^{S})  \textnormal{ and to } \Free_{\sigma_{\ell-1}}(Y^{P})\textnormal{ for all } Y\in \phi(N^{\diamond}_T(u)).
     \]\label{E2}
\end{enumerate}

Note that the $P$-slice of each cluster has $(1-\sqrt{\eps})m$ vertices, while by~\eqref{eq:assignment},
the map $\varphi$ assigns at most $(1-\eps^{1/4})m$ vertices of $T$ to each cluster. Hence
every embedding $\sigma_\ell$ satisfying~\ref{E1} has
\begin{equation}
    \label{eq:free}
    |\Free_{\sigma_{\ell}}(V_i^{P})| \geq \left(1-\sqrt{\eps}\right)m - (1-\eps^{1/4})m \gg \frac{\eps^{1/4}}{2}m, \text{ for every } i \in [t].
\end{equation}
Moreover, since $|S| \ll \sqrt{\eps}m$, we have
\begin{equation}
    |\Free_{\sigma_{\ell}}(V_i^{S})| \gg \frac{\sqrt{\eps}}{2}m, \text{ for every } i \in [t].
\end{equation}
We start by finding an embedding $\sigma_0$ of $T_0$, which consists of a single vertex $\pi_0 = r(T)$.
Let $X := \varphi(\pi_0)$ be the cluster to which $\pi_0$ is assigned,
and let $\diamond \in \{+,-\}$ be such that $\pi_0$ is an $\diamond$-vertex. Since $\pi_0$ is a seed,
property~\ref{E1} requires us to embed it in the $S$-slice of $X$.

To satisfy property~\ref{E2}, observe the following. Let $v \in N_T^{\diamond}(\pi_0)$ and put $Y := \phi(v)$. Then $(X^{S},Y^{S})$ and $(X^{S}, Y^{P})$ are $(\sqrt{\eps},\diamond,d/2)$-regular pairs and by~\cref{fact:typical}~(a), all but~$2\sqrt{\eps}|X^{S}|$ vertices of $X^{S}$ are $\diamond$-typical to $Y^{S}$ and $Y^{P}$. Since there are at most $\Delta$ such neighbours $v$,  all but at most~$2\sqrt{\eps} \Delta |X^{S}|$ vertices of $X^{S}$ are $\diamond$-typical to all $Y^{S}$ and $Y^{P}$ where $Y\in \phi(N^{\diamond}_T(\pi_0))$. Choose any such vertex~${x\in X^{S}}$ and set $\sigma_0(\pi_0) = x$, so that~\ref{E1} and~\ref{E2} are satisfied.

Suppose now that we wish to embed $\pi_\ell$ for some $\ell\geq 1$, and let $\sigma_{\ell-1}$ be an embedding of
$T_{\ell-1}$ satisfying~\ref{E1} and~\ref{E2}. Let $p$ be the parent of $\pi_\ell$ in $T$ and
write $q := |V(\pi_\ell)|$. Since $p \in V(T_{\ell-1})$, it is already embedded by $\sigma_{\ell-1}$.
Let $(u_0,\dots,u_{q})$ be an ordering of $V(\pi_\ell) \cup \{p \}$
with $u_0 = p$ such that, for every $i\in [q]$, exactly one $j<i$ satisfies $u_j \in N_T(u_i)$.
We construct a sequence $\sigma_{\ell}^{0},\dots,\sigma_{\ell}^{q}$ of extensions of $\sigma_{\ell-1}$,
where $\sigma_{\ell}^{0} = \sigma_{\ell-1}$ and each $\sigma_{\ell}^{i}$ extends $\sigma_{\ell}^{i-1}$ by embedding $u_i$. 

These will
satisfy analogous versions of properties~\ref{E1} and~\ref{E2}. That is, $u_0$ is already embedded into an $S$-slice, whereas all $u_i$ with $i\geq 1$ will be embedded into a $P$-slice, and moreover, if $u_i$ is a $\diamond$-vertex, then $\sigma_{\ell}^i(u_i)$ is $\diamond$-typical to $\Free_{\sigma_{\ell}^{i-1}}(Y^P)$ for all $Y\in \varphi(N^\diamond_T(u_i))$.
Setting $\sigma_{\ell} = \sigma_{\ell}^{q}$ then gives an embedding of $T_{\ell}$ into $D$.

Thus, assume we are about to embed a vertex $u_i \in V(\pi_\ell)$, for some $i \geq 1$, and let $u_j$ be its parent, that we have already embedded to satisfy the desired properties.
Let~$\diamond\in \{+,-\}$ be such that $u_i$ is a $\diamond$-vertex,
and let $\ast\in \{+,-\} \setminus \{\diamond\}$.
Moreover, let $X$ be the cluster mapped by $\varphi$ to embed $u_i$.
By~\eqref{eq:free}, we get
\[
  |N^{\ast}(u_j) \cap \Free_{\sigma_{\ell}^{i-1}}(X^{P})| \geq |N^{\ast}(u_j)\cap \Free_{\sigma_{\ell-1}}(X^{P})| - (i-j) \geq (d/2-\sqrt{\eps})\frac{\eps^{1/4}}{2}m - \beta k \gg \sqrt{\eps}|X^{P}|.
\]

Moreover, using that $|S| \leq 2/\beta$, we see that  the number of vertices in $N^{\ast}(u_j) \cap \Free_{\sigma_{\ell}^{i-1}}(X^{S})$
is also at least~$\sqrt{\eps}|X^{S}|$. Consider any~$Y\in \phi(N_T^{\diamond}(u_i))$. By~\eqref{eq:free},
the number of unused vertices in $Y^{S}$ and $Y^{P}$ are greater than~$\sqrt{\eps}|Y^S|$ and $\sqrt{\eps}|Y^{P}|$, respectively.
In particular, all these sets are $\sqrt{\eps}$-significant. Write
\[
    \mathcal{Y} := \{ \Free_{\sigma_{\ell}}(Y^S),\; \Free_{\sigma_{\ell}}(Y^{P}) : Y \in \phi(N^{\diamond}_T(u_i))\},
\]
and say a vertex $x \in V(D)$ is \defi{good} if it is $\diamond$-typical to all sets in $\mathcal{Y}$.

As before, by~\cref{fact:typical}~(a), all but at most $2\Delta\sqrt{\eps}\,|X^{S}|$
vertices of~$X^{S}$, and all but at most $2\Delta\sqrt{\eps}\,|X^{P}|$ vertices
of~$X^{P}$, are $\diamond$-typical to every set in~$\mathcal{Y}$.

Finally, if $u_i \in S$, choose a good vertex $x\in \Free_{\sigma_{\ell}}(N^{\ast}(u_j) \cap X^S)$;
otherwise choose a good vertex $x \in \Free_{\sigma_{\ell}}(N^{\ast}(u_j)\cap X^{P})$. Define $\sigma(u_i) = x$.
This implies that~\ref{E1} and~\ref{E2} are satisfied for $\sigma_{\ell}^{i}(u_i)$.
\section{Proof of \cref{thm:balance_trees}}

The proof of \cref{thm:balance_trees} follows the same lines as that of \cref{thm:main_dense},
with three differences: Step 2 is omitted, the caterpillar in Step 3 is constructed differently, and
the embedding of $T$ in Step 4 is simpler. Step 1 is unchanged.

\textbf{Step 0 (Setting up)}.
Given $\alpha > 0$ we introduce new constants $k_0, \gamma, \eps$ and $d$ such that
\[  
1/k_0 \ll \gamma \ll \eps \ll d \ll \alpha.
\]

Let $n \geq k> k_0$, and let $D$ be a digraph on at least $(1+\alpha)kn$ arcs
where $k\geq \alpha n$, and let~$T$ be an antidirected tree on $k$ arcs, with root~$r$ and at most $\gamma k$ vertices at distance $i$ from $r$ in $T$, for all $i\geq 0$. We denote by~$L_i$ the set of vertices in $T$ at a distance exactly $i$ from $r$. So, we have~${|L_i|\leq \gamma k}$ for all $i\geq 0$. 

\textbf{Step 3 (Finding a good assignment)}. We will find a homomorphism $\phi:V(T) \longrightarrow V(R)$ such that
\begin{equation}\label{eq:assignment_layers}
    |\phi\inv(V_i)| < (1- \sqrt{\eps})|V_i|\;\text{ for all } i \in [t],
\end{equation}
and moreover, for each $\ell\geq 0$, all elements of $L_\ell$ are assigned to a single cluster of $R$.

Again, it will be sufficient to find an antidirected caterpillar $C$ and a homomorphism $\psi:V(T)\longrightarrow V(C)$ satisfying the following property for
all $v \in V(C)$, except for two vertices,
\begin{equation}
    \label{eq:full_cat_layers}
    (1-\alpha^2/8)m \leq |\psi\inv(v)|\leq (1-\sqrt{\eps})m,
\end{equation}
where additionally we impose that for each $\ell\geq 0$, all elements of $L_\ell$ are assigned to the same vertex of~$C$.
In an identical way to the proof of \cref{thm:main_dense}, we will have $|C|\leq (1+\alpha^2)\kappa$.

Let $h_0$ be the maximum distance of a vertex in $T$ from $r$, and let $h = \ceil{h_0/2}$.
We construct $C$ and $\psi$ in $h$ steps. We define $T_i := T[\cup_{j \leq i} L_i]$ for all $0 \leq i \leq h_0$, and note that this is always connected and $T = T_{h_0}$. If $h_0$ is odd, then note that $L_{2h} = L_{h_0+1} = \emptyset$.

Similarly to \textbf{Step 3} in~\cref{thm:main_dense}, we construct a sequence of caterpillars
$C_0\subseteq C_1\subseteq \dots \subseteq C_h$. Each~$C_i$ consists of a spine $P_i$ with attached leaves.
For each $i\in [h]$, we will also find a pair of vertices $x_i,y_i\in V(C_i)$
satisfying ~\ref{S1}, \ref{S2} and~\ref{S3}. In contrast to~\cref{thm:main_dense}, here we 
must also keep track of whether the current tip is a sink or a source of the spine. Thus, for each $0\leq i \leq h$, 
we fix $\circ_i \in \{+,-\}$ such that $x_i$ is an $\circ_i$-vertex of $P_i$.

We also construct a sequence of homomorphisms $\psi_0,\dots,\psi_h$ such that $\psi_i:V(T_{2i})\longrightarrow V(C_i)$ satisfies the following for all $v\in V(C_i)$,
\begin{equation}
    \label{eq:space_in_cat_layers}
    |\psi_i\inv(v)|\leq (1-\sqrt{\eps})m,
\end{equation}
and for all $v\in V(C_i)\setminus \{x_i,y_i\}$, we have
\begin{equation}
    \label{eq:full_cat_i_layers}
    |\psi_i\inv(v)|\geq (1-2\sqrt{\eps})m.
\end{equation}
Moreover, if $r$ is a $\diamond$-vertex for $\diamond\in \{+,-\}$, we have 
\begin{equation}
    \label{eq:alt_spine}
    \psi_i(L_{2i-1})= \begin{cases}
        y_i, \text{ if } \circ_i = \diamond,\\
        x_i, \text{ otherwise}.
    \end{cases} \quad \text{and} \quad \psi_i(L_{2i})= \begin{cases}
        x_i, \text{ if } \circ_i = \diamond,\\
        y_i, \text{ otherwise.}
    \end{cases}
\end{equation}
Furthermore, we will have $\psi_i|_{V(T_j)} = \psi_j$ for all $j<2i$, and, if $u\in V(T_{2i})$ is a source (resp.\ sink), then $\psi_i(u)$ is a source (resp.\ sink) in $C_i$. 

We may assume that $r$ is a source in $T$ (if it is a sink, we do as follows with all orientations reversed).
Note that this implies all vertices belonging to $L_{2i}$ for some $i
\geq 0$ are sources, and all vertices belonging to $L_{2i+1}$ for some $i\geq 0$ are sinks.

We start with $C_0$ consisting of an arc $x_0y_0$ (directed from $x_0$ to $y_0$), and think of $C_0$ as having spine $P_0 = \{x_0\}$.
Moreover, we set $\circ_0 = +$.
The homomorphism~${\psi_0:V(T_0) \longrightarrow V(C_0)}$ is defined by setting~${\psi_0(r) = x_0}$.
Clearly, the caterpillar $C_0$ and $\psi_0$ satisfy~\eqref{eq:space_in_cat_layers}~and~\eqref{eq:full_cat_i_layers}. 
Furthermore, $x_0$ and $y_0$ satisfy~\ref{S1}, \ref{S2} and~\ref{S3}.

Now, suppose that we have already constructed $C_i$ and $\psi_i$ for $0\leq i\leq h-1$.

As before, we construct a caterpillar $C_{i+1}$ from $C_i$ depending on how many vertices have already been embedded to each of $x_i$ and $y_i$. 
We follow Cases 1--4 from \textbf{Step 3} in the proof of~\cref{thm:main_dense}. Here, however, a vertex $w$ of $C_i$ is
\defi{full} if $|\psi_i^{-1}(w)| > (1-2\sqrt{\eps})m$. Moreover, we set $\circ_{i+1} = \circ_i$ in Cases 1,2 and~4, and
$\circ_{i+1} = \{+,-\} \setminus \circ_i$ in Case 3.

Now we will extend $\psi_{i}$ to get $\psi_{i+1}$. If the current tip is a source, i.e. $\circ_{i+1} = +$,
then we assign $\psi(u) = y_{i+1}$ for every $u\in L_{2i+1}$ and $\psi(u) = x_{i+1}$ for every~${u\in L_{2i+2}}$; otherwise
we assign $\psi(u) = x_{i+1}$ for every $u \in L_{2i+1}$ and $\psi(u) = y_{i+1}$ for every $u\in L_{2i+2}$.
It is clear that arcs are preserved under this assignment and thus $\psi_{i+1}$ is a homomorphism of $T_{2i+2}$ satisfying~\eqref{eq:alt_spine}.

Note that, for $v \in \{x_{i+1}, y_{i+1}\}$, since $L_{2i+1}$ and $L_{2i+2}$ both have size at most $\gamma k$, we have
\[
    |\psi^{-1}_{i+1}(v)| \leq |\psi^{-1}_{i}(v)| + \gamma k \leq (1-\sqrt{\eps})m,
\]
which satisfies~\eqref{eq:space_in_cat_layers}. We also know that every $z \in V(C_{i+1})\setminus \{x_{i+1},y_{i+1}\}$ satisfies
\[
    |\psi^{-1}_{i+1}(z)| = |\psi^{-1}_{i}(z)| \leq (1-\sqrt{\eps})m,
\]
which satisfies~\eqref{eq:space_in_cat_layers}. Therefore, $\psi_{i+1}$ and $C_{i+1}$ satisfy all of the desired properties. At the end of this process, we let $C := C_h$ and $\psi := \psi_h$. 

In the same way as in \cref{thm:main_dense}, $|C|\leq (1+\alpha^2)k$ and applying \cref{prop:ST_caterpillar} yields an embedding $\tau$ of $C$ in $R$
and we set $\phi = (\tau \circ \psi)$, so that~\eqref{eq:assignment_layers} is satisfied.

\textbf{Step 4 (Embedding of $\boldsymbol{T}$)}.
Enumerate the vertices of $V(T)$ by $u_0,\dots,u_{k}$ so that $u_0 = r$ and all vertices in $L_j$ come earlier than those in $L_{j'}$, for all $j<j'$. In particular, for all $i\geq 1$, each $u_i$ has exactly one neighbour $u_{i'}$ satisfying $i'<i$. For $0\leq i \leq k$ we define $T^i \coloneqq T[\cup_{j\leq i}u_j]$. We find a copy $\sigma$ of the tree $T$ in $D$ by embedding the elements one-by-one using this ordering, starting with $r$, maintaining that
\begin{equation}\label{eqn:embedding_via_assignment_layers}
    \sigma(u) \in \phi(u) \textnormal{ for all } u \in V(T).
\end{equation}
For any $u\in V(T)$, note that all neighbours $v$ of $u$ succeeding $u$ in the ordering, have the same distance to $r$. Thus we can define $Y_u$ to be the unique cluster that contains $\phi(v)$ for all neighbours $v$ of $u$ that succeed $u$. We will show that, if $u_i\in V(T)$ is a $\diamond$-vertex for some $\diamond\in \{+,-\}$, then 
\begin{equation}\label{eqn:typical_property_layers}
     \sigma(u_i) \textnormal{ is } \diamond\textnormal{-typical to } Y_{u_i}\setminus \sigma(T^{i-1}).
\end{equation}
We first embed $u_0=r$. Let $X = \phi(u_0)$ and let $\diamond \in \{+,-\}$ be such that $u_0$ is an $\diamond$-vertex. Now, $(X,Y_{u_0})$ is an $(\eps,\diamond,d)$-regular pair and by~\cref{fact:typical}~(a), all but $\eps m$ vertices of $X$ are $\diamond$-typical to $Y_{u_0}$. Choose one such vertex $x\in X$ for the image of $u_0$, so that~\eqref{eqn:typical_property_layers} is satisfied.

Suppose now we wish to embed $u_i$ for $i\geq 1$, and let $u_j$ be the neighbour of $u_i$ which precedes it. Let~$\diamond\in \{+,-\}$ be such that $u_i$ is a~$\diamond$-vertex. Let~$\ast\in \{+,-\} \setminus \{\diamond\}$. Let $X = \phi(u_i)$, and note that by~\eqref{eq:assignment_layers} and~\eqref{eqn:embedding_via_assignment_layers},  sets $X\setminus \sigma(T^{i-1})$ and $Y_{u_i} \setminus \sigma(T^{i-1})$ both have size greater than $\sqrt{\eps}m$. In particular, $Y_{u_i}\setminus \sigma(T^{i-1})$ is an $\eps$-significant set. As before, $(X,Y_{u_i})$ is an $(\eps,\diamond,d)$-regular pair.
By definition of $Y_{u_j}$, we have $X  = Y_{u_j}$.
Since~\eqref{eqn:typical_property_layers} is satisfied for $u_j$, it follows that $\sigma(u_j)$ is $\ast$-typical to $X\setminus \sigma(T^{i-1})$, and therefore the set $X'$ of $\ast$-neighbours of $\sigma(u_j)$ in $X\setminus \sigma(T^{i-1})$ has size at least $(d-\eps)|X\setminus \sigma(T^{i-1})| > (d-\eps)\sqrt{\eps}m>\eps m$.
Similarly as before, by~\cref{fact:typical}, all but at most $\eps m$ vertices of $X$ are $\diamond$-typical to $Y_{u_i}\setminus  \sigma(T^{i-1})$. Choose one such vertex $x\in X'$ that satisfies this property, and define $\sigma(u_i) = x$. This implies that~\eqref{eqn:typical_property_layers} is satisfied for $\sigma(u_i)$, and clearly~\eqref{eqn:embedding_via_assignment_layers} holds. This completes our argument and we find a copy $\sigma$ of $T$ in $D$, as desired.
\section{Applications}\label{sec:applications}

In this section we discuss several consequences of \cref{thm:main_dense}. In particular, we obtain an asymptotic version of Burr's conjecture for bounded degree antidirected trees, and linear upper bounds on the corresponding directed and oriented Ramsey numbers.

\subsection{Burr's conjecture}

A celebrated result in extremal graph theory, known as the Gallai–Hasse–Roy–Vitaver (GHRV) theorem~\cite{gallai1968directed,hasse1965algebraischen,roy1967nombre,vitaver1962determination}, proved independently by these four authors, considers the chromatic number of a digraph that forces the existence of a directed path. The chromatic number of a digraph~$D$ is the smallest integer~$m$ for which there exists a colouring of $V(D)$ with~$m$ colours such that no two adjacent vertices receive the same colour. The GHRV theorem shows that every orientation of a graph with chromatic number $k+1$ contains a copy of the directed path with $k$ arcs. To embed more general oriented trees, one needs to increase the chromatic number threshold, since any regular tournament on $2k-1$ vertices (and thus with chromatic number $2k-1$) does not contain
an out- or in-star with $k$ arcs. 

In 1980, Burr~\cite{burr1980subtrees} conjectured the following optimal bound.
\begin{conjecture}[Burr \cite{burr1980subtrees}]\label{conj:Burr}
    Every digraph with chromatic number at least $2k$ contains a copy of every oriented tree $T$ with $k$ arcs.
\end{conjecture}

Burr proved in 1980 that chromatic number at least $k^2$ is sufficient, and this was not improved until more than thirty years later when Addario-Berry, Havet, Linhares Sales, Reed and Thomassé~\cite{Addarioberry2013oriented} showed that $(k^2+k+3)/2$ is sufficient\footnote{In~\cite{Addarioberry2013oriented}, the results are stated for trees with $k$ vertices and thus the bounds read slightly differently.}. The first sub-quadratic bound was obtained by Bessy, Gonçalves and Reinald~\cite{bessy2024oriented}, who proved that chromatic number $Ck^{3/2}$ suffices for some absolute constant $C$.
For antidirected trees, considerably stronger bounds are known. Addario-Berry, Havet, Linhares Sales, Reed and Thomassé~\cite{Addarioberry2013oriented} proved that every digraph with chromatic number at least $5k-7$ contains every antidirected tree with $k$ arcs. As an immediate consequence of \cref{thm:main_dense}, we lower this to~${(2+o(1))k}$ for antidirected trees of bounded maximum degree, therefore showing that Burr's conjecture holds asymptotically for this class of trees.

\begin{corollary}
    For all $\alpha>0$ and $\Delta\in \NN$ there exists $k_0$ such that for all $n\geq k>k_0$ with $k\geq \alpha n$ the following holds. Every digraph $D$ on $n$ vertices with chromatic number at least $(2+\alpha)k$ contains a copy of every antidirected tree $T$ with $k$ arcs and $\Delta(T)\leq \Delta$.
\end{corollary}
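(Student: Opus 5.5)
The plan is the standard reduction from chromatic number to minimum degree, followed by an application of \cref{thm:main_dense}.

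\textbf{Step 1: pass to a critical subgraph.} Since $\chi(D)\geq (2+\alpha)k$, I take a subdigraph $D^*\subseteq D$ that is vertex-critical with respect to chromatic number and has $\chi(D^*) = \lceil (2+\alpha)k\rceil$. Concretely, I delete vertices one at a time while the chromatic number of the underlying graph stays at least $(2+\alpha)k$. Here chromatic number depends only on the underlying graph. In a vertex-critical graph with chromatic number $c$, every vertex has degree at least $c-1$: otherwise, colour $D^*-v$ with $c-1$ colours and extend to $v$. The degree counted here is the number of neighbours in the underlying simple graph. Each neighbour contributes at least one arc, so every vertex of $D^*$ lies in at least $(2+\alpha)k-1$ arcs. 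Summing over vertices counts each arc twice, giving
\[
e(D^*)\geq \tfrac12 |D^*|\bigl((2+\alpha)k-1\bigr)\geq (1+\alpha/3)k|D^*|
\]
for $k$ large.

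\textbf{Step 2: check the density hypothesis.} Let $n^*=|D^*|$. Then $n^*\geq \chi(D^*)\geq (2+\alpha)k> k$, and $n^*\leq n\leq k/\alpha$, so $k\geq \alpha n^*$. I apply \cref{thm:main_dense} with $\alpha' = \alpha/3$ in place of $\alpha$, and with $\Delta$. It returns a threshold $k_0'$, and I take $k_0\geq k_0'$, large enough that $(2+\alpha)k-1\geq (2+2\alpha/3)k$. Now $D^*$ has $n^*\geq k>k_0$ vertices, satisfies $k\geq \alpha' n^*$ since $\alpha'\leq\alpha$, and has $e(D^*)\geq (1+\alpha')kn^*$. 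Hence $D^*$, and therefore $D$, contains a copy of every antidirected tree $T$ with $k$ arcs and $\Delta(T)\leq\Delta$.

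\textbf{Main subtlety.} The only point needing care is the arc count. Bidirected pairs contribute two arcs, which only helps the lower bound. The factor $\tfrac12$ from double counting is exactly what turns chromatic number $(2+\alpha)k$ into average total degree of about $(2+\alpha)k$, and hence $e\gtrsim (1+\alpha/2)kn^*$. This matches the $(1+\alpha')kn$ threshold of \cref{thm:main_dense}, with slack absorbing the $-1$ term. The constant bookkeeping, namely choosing $\alpha'$ and $k_0$ so that the linearity condition $k\geq \alpha' n^*$ holds in the subgraph, is routine. It works because passing to a subgraph only decreases the number of vertices.
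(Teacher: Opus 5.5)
Your proof is correct and follows the paper's argument: pass to a (vertex-)critical subgraph, use minimum degree at least $\chi-1$ to get roughly $(1+\alpha/2)kn^*$ arcs, and apply \cref{thm:main_dense} with a smaller value of $\alpha$. Your use of $\alpha/3$ and your explicit checks that $n^*\geq k$ and $k\geq \alpha n^*$ are slightly more careful than the paper. The paper writes $(t-1)n/2=(1+\alpha/2)kn$, which drops the $-n/2$ term, and it does not remark that the critical subgraph may have fewer vertices than $D$.
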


\begin{proof}
    Let $\alpha>0$ and $\Delta\in \NN$, and choose $k_0$ to be the output of \cref{thm:main_dense} when applied with $\alpha/2$ and~$\Delta$. Let $t = (2+\alpha)k$ and suppose that $D$ is a digraph that yields a counterexample. We may assume~$D$ is $t$-critical, that is, if we delete any vertex or arc from $D$ then the resulting subgraph has chromatic number less than $t$. Then $D$ is oriented, and has minimum total degree at least $t-1$. Therefore $D$ has at least $(t-1)n/2 = (1+\alpha/2)kn$ arcs. Applying \cref{thm:main_dense} with $\alpha/2$ playing the role of $\alpha$, we obtain a copy of every antidirected tree $T$ with $k$ arcs and $\Delta(T)\leq \Delta$, as desired.
\end{proof}

A corresponding corollary can be obtained from~\cref{thm:balance_trees}.

\subsection{Directed Ramsey numbers of trees}

Ramsey numbers for tournaments and digraphs provide a natural directed analogue of classical graph Ramsey theory. We consider both tournaments and complete symmetric digraphs.
The \defi{$\ell$-colour oriented Ramsey number} of an oriented graph $H$, denoted by \defi{$\overrightarrow{r}(H,\ell)$}, is the smallest $n$ such that every $\ell$-edge-coloured tournament on $n$ vertices (where every pair of vertices form one edge) contains a monochromatic copy of $H$. The \defi{$\ell$-colour directed Ramsey number}, denoted by \defi{$\overleftrightarrow{r}(H,\ell)$}, is defined the same way but with replacing any tournament with the complete symmetric digraph on $n$ vertices. This directed Ramsey number was introduced by Bermond~\cite{BERMOND1974directedramsey}, and the corresponding notion for tournaments has since become a standard object of study. We use the notation from~\cite{bucic2017directed}.

For the directed path $P_k$ with $k$ arcs, Chv\'atal~\cite{chvatal1972monochromatic} and independently Gy\'arf\'as and Lehel~\cite{gyarfas1973ramsey} proved that $\overrightarrow r(P_k,\ell)=k^\ell$. This can also be deduced as a simple corollary of the aforementioned GHRV theorem.
Motivated by this result, Yuster~\cite{yuster2017ramsey} asked for the largest possible $\ell$-colour Ramsey number among oriented trees with $k$ vertices. Buci\'c, Letzter and Sudakov~\cite{bucic2017directed} showed that every oriented tree $T$ satisfies
$\overrightarrow r(T,\ell)\le c_\ell|T|^\ell $
and
$\overleftrightarrow r(T,\ell)\le d_\ell|T|^{\ell-1},$
where $c_\ell$ and $d_\ell$ depend only on $\ell$. These bounds are best possible up to the constant factor for some trees, such as directed paths, although for specific classes of trees substantially smaller Ramsey numbers may exist.

For antidirected trees of bounded maximum degree, \cref{thm:main_dense} implies that both the directed and oriented Ramsey numbers grow only linearly with the size of the tree.

\begin{corollary}
    For all $\alpha>0$ and $\Delta,\ell \in \NN$ with $\ell\geq 2$ there exists $k_0$ such that for all $k>k_0$ the following holds. For every antidirected tree $T$ on $k$ arcs with $\Delta(T)\leq \Delta$, we have 
    \begin{equation*}
        \overleftrightarrow{r}(T,\ell)\leq (1+\alpha)\ell k \quad \text{ and } \quad \overrightarrow r(T,\ell)\le (2+\alpha)\ell k.
    \end{equation*}
\end{corollary}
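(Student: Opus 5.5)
The plan is to reduce both bounds to \cref{thm:main_dense} by pigeonhole on colour classes.

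\emph{Directed Ramsey number.} Let $\alpha>0$, $\Delta$ and $\ell\geq 2$ be given. Take $k_0$ from \cref{thm:main_dense} applied with $\alpha' := \min\{\alpha/4, 1/(4\ell)\}$ in place of $\alpha$ and with $\Delta$, and enlarge $k_0$ if needed. Set $n := \lceil(1+\alpha)\ell k\rceil$ and consider an $\ell$-colouring of the arcs of the complete symmetric digraph on $n$ vertices, which has $n(n-1)$ arcs. Some colour class $D_c$ then has at least $n(n-1)/\ell$ arcs.

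It remains to check that $e(D_c)\geq (1+\alpha')kn$ and $k\geq \alpha' n$. For the first, note that
\[
  \frac{n-1}{\ell}\geq (1+\alpha)k-\frac1\ell\geq (1+\alpha')k
\]
once $k$ is large. For the second, $n\leq (1+\alpha)\ell k+1$, so $k\geq n/(2(1+\alpha)\ell)$. This is at least $\alpha' n$ provided we also take $\alpha'\leq 1/(2(1+\alpha)\ell)$. Shrinking $\alpha'$ only requires a larger $k_0$, which is harmless because the corollary's $k_0$ may depend on $\alpha,\Delta,\ell$. We also have $n\geq k$. Applying \cref{thm:main_dense} to $D_c$ gives a monochromatic copy of $T$.

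\emph{Oriented Ramsey number.} Set $n:=\lceil(2+\alpha)\ell k\rceil$. A tournament on $n$ vertices has $n(n-1)/2$ arcs, so some colour class has at least
\[
  \frac{n(n-1)}{2\ell}\geq \Big(1+\frac{\alpha}{2}\Big)kn - \frac{n}{2\ell}\geq (1+\alpha')kn
\]
arcs. The condition $k\geq \alpha' n$ holds as before, using $n\leq (2+\alpha)\ell k+1$. We again apply \cref{thm:main_dense} with a suitably small $\alpha'$ depending on $\alpha$ and $\ell$.

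The only delicate point is the choice of constants. \cref{thm:main_dense} needs both excess density and $k\geq\alpha n$, so the same $\alpha'$ must be small relative to both $\alpha$ and $1/\ell$. With that choice made, no real obstacle remains.
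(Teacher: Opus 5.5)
Your proposal is correct and follows the paper's argument: pass to a densest colour class by averaging, then apply \cref{thm:main_dense} with a density parameter small relative to both $\alpha$ and $1/\ell$. Your separate $\alpha'$ is cleaner than the paper's write-up, which invokes the theorem with $\alpha$ itself while only verifying $(1+\alpha/2)kn$ arcs. One small fix: take $n=\lfloor(1+\alpha)\ell k\rfloor$ and $n=\lfloor(2+\alpha)\ell k\rfloor$ instead of the ceilings, because proving the Ramsey property on $\lceil x\rceil$ vertices only gives $r\leq\lceil x\rceil$, not $r\leq x$. All your estimates survive with the floor, since the extra additive loss is absorbed for large $k$.
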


\begin{proof}
    Let $\alpha>0$, $\Delta\in \NN$ and $\ell \geq 2$.
    We may assume that $\alpha\in (0,1)$ is sufficiently small so that $\alpha\leq 1/(4\ell)$ holds, as this only strengthens the statement. Let $k_0$ be the output of~\cref{thm:main_dense} with inputs $\alpha$ and~$\Delta$.  
    We first prove $\overleftrightarrow{r}(T,\ell)\leq (1+\alpha)\ell k$.
    Let $D$ be an $\ell$-edge-coloured complete digraph on $n = (1+\alpha)\ell k$ vertices, and note that $\alpha n \leq 2\alpha \ell k \leq k$. By averaging, there is a colour $c$ that appears on at least $n(n-1)/\ell = (1+\alpha)k(n-1)> (1+\alpha/2)kn$ arcs. Applying \cref{thm:main_dense} to the spanning subdigraph consisting of all arcs of colour $c$, we find a monochromatic copy of $T$ in $D$, as desired.

    The same averaging argument, together with the observation that every tournament contains exactly half of the arcs of the complete symmetric digraph, yields the proof of $\overrightarrow r(T,\ell)\le (2+\alpha)\ell k$.
\end{proof}

The above bound on $\overleftrightarrow{r}(T,\ell)$ is asymptotically best possible up to the $(1+\alpha)$ factor. Indeed, a directed analogue of a construction of Erd\H{o}s and Graham~\cite{ErdosGraham1973} shows that, for infinitely many values of $\ell$, and for every $k$-arc tree $T$, we have $
\overleftrightarrow r(T,\ell)\ge \ell(k-1)+2$.

\section{Extension of AI proof of the Erd\H{o}s--S\'os conjecture to \cref{conj:main}}\label{sec:AI}

\begin{theorem}[Extension of~\cite{AI_pdf}]\label{thm:AI}
    Every directed graph with $e(D)> (k-1)n$ contains a copy of every antidirected tree on $k$ arcs.
\end{theorem}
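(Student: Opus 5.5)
We prove \cref{thm:AI} through a counting argument. The digraph $D$ is converted into a weighted bipartite structure, and we show that the expected number of homomorphic copies of $T$ exceeds the number of degenerate (non-injective) ones.

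First we reduce to a bipartite setting. Form the bipartite graph $B$ with parts $V^+=\{v^+: v\in V(D)\}$ and $V^-=\{v^-: v\in V(D)\}$, placing an edge $u^+w^-$ whenever $(u,w)\in E(D)$. Then $e(B)=e(D)>(k-1)n$ and $|B|=2n$, so the average degree of $B$ exceeds $k-1$. Fix a proper $2$-colouring of the underlying tree of $T$ in which sources form one class and sinks the other. A copy of $T$ in $D$ is then the same as an embedding $f$ of the underlying tree into $B$ with three properties: sources go to $V^+$, sinks go to $V^-$, and $f$ is \emph{projection-injective}, meaning no vertex $v$ of $D$ is used both as $v^+$ and as $v^-$, nor twice on the same side. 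The projection-injectivity is the only genuinely new constraint compared with the undirected case. Since $T$ is antidirected, every arc goes from a source to a sink, so arcs of $T$ correspond exactly to edges of $B$ respecting the sides.

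Next we adapt the counting argument of~\cite{AI_pdf}. First pass to a subgraph $B'\subseteq B$ whose minimum degree is suitably large, using the standard deletion of low-degree vertices; for this we only need the lower bound $e(B)>(k-1)n=(k-1)|B|/2$. We then count homomorphisms (walks in the shape of $T$) of the underlying tree into $B'$, rooted at a leaf and built greedily. Each step extending from an already embedded vertex $x$ has $\deg(x)$ choices. Among these, at most $k-1$ choices clash with a previously used \emph{vertex of $D$}. Here a clash means hitting the same copy in $B'$, or hitting the twin copy $v^\mp$ of a used $v^\pm$. Each used vertex of $D$ blocks at most one vertex on the side currently being extended, because the twin lies on the opposite side. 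Consequently the count of bad choices at each step is still at most the number of previously embedded vertices, exactly as in the undirected proof.

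The main obstacle is verifying this last point carefully. The undirected argument subtracts the number of already embedded vertices from $\deg(x)$, and we must check that the twins never produce extra collisions. The key observation is that new vertices at a given step lie entirely on one side of $B'$. A used $D$-vertex $v$ therefore forbids only the single vertex of $B'$ on that side corresponding to $v$. Once this is checked, the weighted inequality of~\cite{AI_pdf} (a convexity/averaging bound showing that the weighted number of injective extensions is positive whenever the average degree exceeds $k-1$) goes through verbatim with $B'$ in place of the host graph. This yields a projection-injective embedding, and hence a copy of $T$ in $D$.
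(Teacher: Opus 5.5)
\textbf{There is a genuine gap.} Your bipartite reformulation is correct, and so is your check that a used vertex of $D$ blocks at most one vertex on the side being extended. But these only make a \emph{greedy} embedding work, and a greedy embedding cannot reach the threshold $k-1$.

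From $e(B)>(k-1)|B|/2$, deleting low-degree vertices only guarantees a subgraph $B'$ with $\delta(B')>(k-1)/2$, and this is essentially tight. Take $D$ with all arcs in both directions between disjoint sets of sizes $\lceil k/2\rceil$ and $N>k^2$, and no other arcs. Then $e(D)>(k-1)n$, yet every nonempty subgraph of $B$ has a vertex of degree at most $\lceil k/2\rceil$. So your per-step lower bound $\deg(x)-(k-1)$ on good choices is not positive. What your argument genuinely proves is the statement under $e(D)>2(k-1)n$, which is the hypothesis needed for $\delta(B')\geq k$.

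Removing this factor $2$ is the entire content of the theorem. You delegate it to a ``weighted inequality of~\cite{AI_pdf}'' that you neither state nor prove. It is also not what the argument the paper adapts actually does: that argument uses no minimum-degree subgraph, no homomorphism counts and no convexity.

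The missing idea is a counting argument over permutations of $V(D)$ itself.
\begin{itemize}
\item For $\pi=(v_0,\dots,v_{n-1})$, call $(\pi,j)$ $+$-marked if $v_0v_j\in E(D)$ and $-$-marked if $v_jv_0\in E(D)$. Then $M^+(D)=M^-(D)=(n-1)!\,e(D)$.
\item Let $C^{\diamond}(T,r)$ be the number of $\diamond$-marked pairs whose prefix $\{v_0,\dots,v_j\}$ contains a copy of $T$ with $r\mapsto v_0$. One shows $M(D)\le C^{\diamond_r}(T,r)+(|T|-2)\,n!$ by induction on $|T|$.
\item If $r$ is a leaf with neighbour $x$, consider, for each $\pi$, the indices $j_\ell$ with $(\pi,j_\ell)\in\mathcal C^{\ast}(T-r,x)$. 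Reversing the prefix $v_0,\dots,v_{j_\ell}$ (and the suffix) at every such index other than the smallest injects all but at most $n!$ of these pairs into $\mathcal C^{\diamond}(T,r)$.
\item Otherwise, split $T=T_1\cup T_2$ at $r$. Let $j'$ be the least possible last index of a copy of $T_1$ rooted at $v_0$. Moving the block $v_{j'+1},\dots,v_j$ directly behind $v_0$ sends each pair of $\mathcal C^{\diamond}(T_1,r)\setminus\mathcal C^{\diamond}(T,r)$, apart from at most one per permutation, injectively to a marked pair outside $\mathcal C^{\diamond}(T_2,r)$.
\end{itemize}

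Because $\pi$ permutes $V(D)$, injectivity in $D$ is automatic, so your twin problem never arises. The one genuinely directed ingredient is $M^+(D)=M^-(D)$, which lets the induction pass from the $\diamond$-rooted tree $T$ to the $\ast$-rooted tree $T-r$.

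Conversely, running such an argument on $B$, as you propose, would give a copy of the underlying tree that is injective in $B$. That copy need not be projection-injective, and need not send sources to $V^+$; it could be a copy of $T$ with all arcs reversed.
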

We follow the proof from \cite{AI_pdf} and the exposition given by Bloom~\cite{proof_exposition}, with some small adaptations to the antidirected setting.

\begin{proof}
Let $T$ be an antidirected tree on $t\geq 2$ vertices.\footnote{We deviate from the usual notation in this paper to fix the number of vertices in the tree instead of number of arcs, to reflect the similarities to \cite{AI_pdf}.}
Let $D$ be a digraph on $n$ vertices that does not contain a copy of $T$. We will show that $e(D)\leq (t-2)n$.
\\
Let $\pi = (v_0,\dots,v_{n-1})$ be a permutation of $V(D)$, and let $1\leq j \leq n-1$ be an index. For $\diamond\in\{+,-\}$, we say that the pair $(\pi,j)$ is \defi{$\diamond$-marked} if $\diamond=-$ and  $v_jv_0$ is an arc of $D$ from $v_j$ to $v_0$ in $D$, or if $\diamond=+$ and  $v_0v_j$ is an arc of $D$  from $v_0$ to $v_j$. 
Let $M^\diamond(D)$ be the number of pairs $(\pi,j)$ that are $\diamond$-marked, for $\diamond\in\{+,-\}$. By summing over all possible vertices $v\in V(D)$ that  play the role of $v_0$ in a permutation $\pi$, we have for each $\diamond\in\{+,-\}$,
\begin{equation*}
    M^\diamond(D) = (n-1)!\sum_{v\in V(D)}\deg^\diamond(v) = (n-1)! e(D).
\end{equation*}
So we can define $M(D)=M^+(D)=M^-(D)$. 
For a root $r\in V(T)$ and $\diamond\in\{+,-\}$, let  $\mathcal C^\diamond(T,r)$ be the set of all  $\diamond$-marked pairs $(\pi,j)$ that contain a copy of $T$ in $\{v_0,\dots,v_j\}$ with $r$ mapped to $v_0$. Set $C^\diamond(T,r):=|\mathcal C^\diamond(T,r)|$. We will show that, for any $r\in V(T)$   we have
\begin{equation}\label{eqn:counting_formula}
   M(D) \leq C^{\diamond_r}(T,r) + (t-2)n!
\end{equation}
where $\diamond_r$ is such that $r$ is a $\diamond_r$-vertex. 
Note that since $T$ does not embed in $D$ by assumption, we have $C^{\diamond_r}(T,r) = 0$ for all $r\in V(T)$, and since $M(D)= (n-1)!e(D)$, proving \cref{eqn:counting_formula} is sufficient to proving \cref{conj:main}.
\\
So, fix a root $r\in V(T)$ and for simplicity, write $\diamond$ for $\diamond_r$.
 We proceed by induction on $t$. If $t = 2$, then $T$ is an edge and any $\diamond$-marked pair yields a copy of $T$ rooted at $v_0$. So  \cref{eqn:counting_formula} holds. Suppose $t\geq 3$ and the formula holds for all smaller values.

\textbf{Case I: $\boldsymbol{r}$ is a leaf in $\boldsymbol{T}$. }
Let $T'$ be the subtree obtained by deleting $r$,  let $x$ be the unique $\diamond$-neighbour of $r$ in $T$, and root $T'$ at $x$. Let $*$ be the unique element of $\{+,-\}\setminus \{\diamond\}$. By induction, $M(D)\leq C^{\ast}(T',x)+(t-3)n!$, and it remains to show that $C^\ast(T',x)\leq C^\diamond (T,r)+ n!$.

Fix a permutation $\pi = (v_0,\dots,v_{n-1})$ of $V(D)$. Consider the indices $j_1<\dots<j_{m_\pi}$ such that $(\pi,j_\ell)\in \mathcal C^\ast(T',x)$.
Note that $v_0$ is in the $\diamond$-neighbourhood of each $v_{j_\ell}$. For each $2\leq \ell\leq m_\pi$, define
\begin{equation*}
    \pi_\ell = (v_{j_\ell}, v_{j_\ell-1}, v_{j_\ell-2},\dots,v_{0}, v_{n-1},v_{n-2},\dots,v_{j_\ell +1}).
\end{equation*}
We claim that $(\pi_\ell,j_{\ell})\in\mathcal C^\diamond(T,r)$. Indeed, as $(\pi, j_\ell)$ is $\ast$-marked, we know that $(\pi_\ell,j_\ell)$ is $\diamond$-marked. 
Moreover, $(\pi,j_1)$ contains a copy of $T'$ with $x$ mapped to $v_0$, and since $j_1<j_\ell$, this does not use the vertex $v_{j_\ell}$. This can be extended to a copy of $T$ by mapping the leaf $r$ to $v_{j_\ell}$, and so, $(\pi_\ell,j_\ell)$ contains this copy of $T$ with $r$ mapped to the first element of $\pi_\ell$, as required.
Finally, observe that the mapping of $(\pi,j_\ell)$ to $(\pi_\ell,j_\ell)$ is injective and so, $C^\diamond(T,r)\ge\sum_\pi(m_\pi-1)\ge C^\ast(T',x)-n!$ which is as desired.

\textbf{Case II: $\boldsymbol{r}$ is not a leaf in $\boldsymbol{T}$. } Then $T$ is the union of two trees $T_1$ and $T_2$ that intersect only in $r$. By induction, $2M(D)\le C^\diamond(T_1, r)+C^\diamond(T_2,r)+(|T_1|+|T_2|-4)n!=C^\diamond(T_1, r)+C^\diamond(T_2,r)+(t-3)n!$. So~\eqref{eqn:counting_formula} holds if we can show that $C^\diamond ( T_1, r)+C^\diamond(T_2,r)\le C^\diamond(T,r)+n!+M(D)$, which we will do now, in the form
\begin{equation}\label{eqn:r_not_leaf}
    C^\diamond ( T_1, r) - C^\diamond(T,r)-n!\le 
M(D)
-C^\diamond(T_2,r).
\end{equation}
For this, consider a pair $(\pi, j)\in\mathcal C^\diamond(T_1,r)\setminus\mathcal C^\diamond(T,r)$. 
Then $\pi=(v_{0},\dots, v_{{n-1}})$, with the edge $v_0v_{j}$ present. Let $j'_\phi$ be the last index such that $v_{j'_\phi}$ is used by  an embedding $\phi$ of $T_1$, and let $j'$ be the minimum of $j'_\phi$ over all embeddings $\phi$ of $T_1$ into vertices $v_0,\dots,v_j$, with $r$ embedded in $v_0$. Then $1\le j'\le j$. If $j'=j$ then add $(\pi,j)$ to a set $\mathcal E$ of exceptional pairs. Assume now that $j'<j$.  As $(\pi, j)$ does not contain $T$, we know that $T_2$ is not contained in the $\diamond$-marked pair $(\pi',j-j')$ where $$\pi'=(v_{0},v_{j'+1}\dots, v_j, v_1,v_2, v_3,\dots, v_{j'-1},v_{j'},v_{j+1},v_{j+2},\dots, v_{{n-1}}).$$
In this way, we have defined a map from  $\mathcal C^\diamond(T_1,r)\setminus (\mathcal C^\diamond(T,r)\cup\mathcal E)$ to $\mathcal M(D)\setminus \mathcal C^\diamond(T_2,r)$, where $\mathcal M(D)$ is the set of all $\diamond$-marked pairs of $D$. It is not hard to see that this map is injective, as we can determine the middle part $(v_1,v_2,v_3,\dots,v_{j'})$ using the definition of $j'$, and thus recover $\pi$. So
\[
C^\diamond(T_1,r)- C^\diamond(T,r)-|\mathcal E|\le M(D) -C^\diamond(T_2,r),
\]
and this proves \eqref{eqn:r_not_leaf}, since for each permutation we added at most one pair to $\mathcal E$, and therefore, $|\mathcal E|\le n!$. This completes the proof.
\end{proof}

In the same way as in Section~\ref{sec:applications}, we can deduce the following corollaries from Theorem~\ref{thm:AI}:

\begin{corollary}
   \label{cor:burr_ai}
   Every digraph $D$ on $n$ vertices with chromatic number at least $2k$ contains a copy of every antidirected tree $T$ with $k$ arcs.
\end{corollary}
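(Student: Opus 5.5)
We follow the same scheme as the proof of the first corollary in \cref{sec:applications}, now with \cref{thm:AI} in place of \cref{thm:main_dense}. Suppose for contradiction that $D$ is a digraph with chromatic number at least $2k$ that contains no copy of some antidirected tree $T$ with $k$ arcs. Let $t=2k$. Among all subdigraphs of $D$ with chromatic number at least $t$, take a minimal one $D^\ast$, minimal with respect to deleting vertices and arcs. Then $D^\ast$ is $t$-critical, and it still contains no copy of $T$, since any copy in $D^\ast$ would be a copy in $D$.

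Next we record the two properties of $D^\ast$ that we need. First, if some pair $u,v$ were joined by both arcs $uv$ and $vu$, deleting one of them would not change which colourings are proper. So the chromatic number would stay at least $t$, contradicting criticality. Hence $D^\ast$ is oriented, and its number of arcs equals the number of edges of its underlying graph $G$. Second, we use the standard minimum degree bound for critical graphs. Suppose some vertex $v$ had total degree at most $t-2$. By criticality, $D^\ast-v$ has a proper colouring with at most $t-1$ colours. The at most $t-2$ neighbours of $v$ leave at least one of these colours free, and giving $v$ that colour yields a proper $(t-1)$-colouring of $D^\ast$, a contradiction. Thus $\delta(G)\ge t-1=2k-1$.

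Now let $n^\ast=|D^\ast|$. Summing degrees gives
\[
e(D^\ast)\ge \frac{(2k-1)n^\ast}{2}=\Bigl(k-\tfrac12\Bigr)n^\ast>(k-1)n^\ast .
\]
By \cref{thm:AI}, $D^\ast$ contains a copy of every antidirected tree on $k$ arcs, and in particular a copy of $T$. This is the desired contradiction.

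The main point to check is that passing to a critical subdigraph is legitimate, which holds because chromatic number is monotone under taking subdigraphs. One must also be careful that degrees are counted in the underlying simple graph, so that arcs are not double counted. Orientedness handles this, since then each edge of $G$ corresponds to exactly one arc, and the bound on $e(D^\ast)$ follows. The case $k=1$ is trivial: a digraph with chromatic number at least $2$ has an arc.
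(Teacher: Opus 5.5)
Your proof is correct and follows essentially the paper's argument. The paper obtains this corollary in the same way as the first corollary of Section~\ref{sec:applications}: pass to a $2k$-critical subdigraph, which is oriented with minimum total degree at least $2k-1$ and so has at least $(k-\tfrac12)n^\ast>(k-1)n^\ast$ arcs, then apply \cref{thm:AI}. You simply fill in details the paper leaves implicit, namely orientedness, the degree bound, and applying the theorem to $D^\ast$ with its own vertex count $n^\ast$.
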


\begin{corollary}
    \label{cor:ramsey_ai}
       Let $k,\ell\geq 2$. For every antidirected tree $T$ on $k$ arcs, we have 
    \begin{equation*}
        \overleftrightarrow{r}(T,\ell)\leq \ell (k-1) +2\quad \text{ and } \quad \overrightarrow r(T,\ell)\le 2\ell(k -1) +2.
    \end{equation*}
\end{corollary}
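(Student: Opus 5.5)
The statement to prove is \cref{cor:ramsey_ai}. I would derive both bounds from \cref{thm:AI} by the same averaging argument as in \cref{sec:applications}, now with exact counts in place of asymptotic ones.

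\textbf{Directed bound.} Set $n=\ell(k-1)+2$ and take an $\ell$-arc-colouring of the complete symmetric digraph on $n$ vertices. It has $n(n-1)$ arcs, so by pigeonhole some colour class $D_c$ has at least $n(n-1)/\ell$ arcs. We have
\[
\frac{n(n-1)}{\ell}=\frac{n(\ell(k-1)+1)}{\ell}=n(k-1)+\frac{n}{\ell}>(k-1)n .
\]
Hence $D_c$, viewed as a digraph on $n$ vertices, has more than $(k-1)n$ arcs. \cref{thm:AI} then gives a copy of $T$ inside $D_c$, and this copy is monochromatic.

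\textbf{Oriented bound.} Set $n=2\ell(k-1)+2$ and take an $\ell$-colouring of a tournament on $n$ vertices. The tournament has $n(n-1)/2$ arcs, so some colour class has at least
\[
\frac{n(n-1)}{2\ell}=\frac{n(2\ell(k-1)+1)}{2\ell}=n(k-1)+\frac{n}{2\ell}>(k-1)n
\]
arcs. Again \cref{thm:AI} yields a monochromatic copy of $T$. In both cases the required inequality is strict, so the pigeonhole bound is enough without any rounding issues.

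The only point needing care is the quantifier convention in \cref{thm:AI}: it applies to any digraph regardless of how $n$ compares with $k$, and it needs no density hypothesis beyond the arc count. This holds here, since the theorem is stated for every digraph, so the corollary follows with no regularity machinery. There is no real obstacle; the whole proof is the pigeonhole step together with checking that the strict inequality holds for the chosen $n$.
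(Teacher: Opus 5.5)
Your proposal is correct and is essentially the paper's argument: the averaging step from Section~\ref{sec:applications}, with Theorem~\ref{thm:AI} in place of Theorem~\ref{thm:main_dense}. Your exact counts $n(n-1)/\ell=(k-1)n+n/\ell$ for $n=\ell(k-1)+2$, and $n(n-1)/(2\ell)=(k-1)n+n/(2\ell)$ for $n=2\ell(k-1)+2$, correctly give the strict inequality that Theorem~\ref{thm:AI} requires.
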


Observe that~\cref{cor:burr_ai} settles Burr's conjecture (\cref{conj:Burr}) for antidirected trees.
Moreover, combining Corollary~\ref{cor:ramsey_ai} with the lower bound obtained by 
Erd\H{o}s and Graham~\cite{ErdosGraham1973}, yields $\overleftrightarrow{r}(T,\ell)=\ell (k-1) + 2$ for infinitely many values of $\ell$.

\section*{Acknowledgements}
This project was initiated whilst the third author was based at the Center for Mathematical Modeling as part of their PhD visiting program. She is grateful to the hosts and the Center for Mathematical Modeling for the opportunity and funding support through Proyecto Basal FB210005. 

\bibliographystyle{plain}
\bibliography{bib}

\end{document}